\newtheorem*{notation*}{Notation}
\newtheorem{theorem}{Theorem}[section]
\newtheorem{proposition}[theorem]{Proposition}
\newtheorem{lemma}[theorem]{Lemma}
\newtheorem{claim}[theorem]{Claim}
\newtheorem{observation}[theorem]{Observation}
\newtheorem{conjecture}[theorem]{Conjecture}
\newtheorem*{remark*}{Remark}
\numberwithin{equation}{section}
\DeclareMathOperator{\sat}{sat}
\DeclareMathOperator{\wsat}{w-sat}
\DeclareMathOperator{\ssat}{s-sat}
\theoremstyle{definition}
\newtheorem*{definition}{Definition}
\def \F {\mathcal{F}}
\def \d {\delta}
\tikzstyle{cir} = [draw, circle, minimum height= 20 mm]
\title{Minimizing the number of edges in $K_{s,t}$-saturated bipartite graphs}
\author[1]{Debsoumya Chakraborti\thanks{This work was supported by the Institute for Basic Science (IBS-R029-C1)}}
\author[2]{Da Qi Chen \thanks{This material is based upon work supported by the Air Force Office of Scientific Research under award number FA9550-20-1-0080}}
\author[2]{Mihir Hasabnis}
\affil[1]{\small Discrete Mathematics Group, Institute for Basic Science (IBS), Daejeon,~South~Korea}
\affil[2]{\small Department of Mathematical Sciences, Carnegie Mellon University, Pittsburgh,~USA}
\affil[ ]{\small Email:
\texttt{debsoumya@ibs.re.kr},
\texttt{daqic@andrew.cmu.edu},
\texttt{mhasabni@andrew.cmu.edu} }
\begin{document}
\maketitle
\begin{abstract}
This paper considers an edge minimization problem in saturated bipartite graphs. An $n$ by $n$ bipartite graph $G$ is $H$-saturated if $G$ does not contain a subgraph isomorphic to $H$ but adding any missing edge to $G$ creates a copy of $H$. More than half a century ago, Wessel and Bollob\'as independently solved the problem of minimizing the number of edges in $K_{(s,t)}$-saturated graphs, where $K_{(s,t)}$ is the `ordered' complete bipartite graph with $s$ vertices from the first color class and $t$ from the second. However, the very natural `unordered' analogue of this problem was considered only half a decade ago by Moshkovitz and Shapira. When $s=t$, it can be easily checked that the unordered variant is exactly the same as the ordered case. Later, Gan, Kor\'andi, and Sudakov gave an asymptotically tight bound on the minimum number of edges in $K_{s,t}$-saturated $n$ by $n$ bipartite graphs, which is only smaller than the conjecture of Moshkovitz and Shapira by an additive constant. In this paper, we confirm their conjecture for $s=t-1$ with the classification of the extremal graphs. We also improve the estimates of Gan, Kor\'andi, and Sudakov for general $s$ and $t$, and for all sufficiently large $n$.      
\end{abstract}

\section{Introduction}

We start with a couple of standard definitions in the literature of graph saturation. For graphs $G$ and $H$, $G$ is said to be $H$-saturated if it does not contain a copy of $H$, but adding any missing edge to $G$ creates a copy of $H$. The saturation number $\sat(n, H)$ is defined as the minimum number of edges in an $n$-vertex $H$-saturated graph. Note that finding the saturation number is, in some sense, the dual of the classical Tur\'an problem.

Zykov \cite{Z}, and Erd\H{o}s, Hajnal, and Moon \cite{EHM} initiated the investigation of graph saturation by studying the saturation number for complete graphs. They proved that the number of edges in an $n$-vertex $K_s$-saturated graph is uniquely minimized by the graph obtained from joining all the edges between a $K_{s-2}$ and an independent set of order $n-s+2$. A few years later, Bollob\'as introduced the closely relation notion of weak saturation in \cite{B68}. For graphs $G$ and $H$, $G$ is called weakly $H$-saturated if it is possible to add all the missing edges to $G$ in some order such that each addition creates a new copy of $H$. Similar to the definition of the saturation number, the function $\wsat(n,H)$ is defined to be the minimum number of edges in an $n$-vertex weakly $H$-saturated graph. It follows easily from the definition that $\wsat(n,H) \le \sat(n,H)$ for any graph $H$. Interestingly, when $H$ is a clique, it is known that these two functions are equal (see, e.g., \cite{L}) even though their sets of extremal graphs are different. For example, if $H$ is just a triangle $K_3$, then all trees are weakly $K_3$-saturated, but only stars are $K_3$-saturated.

Erd\H{o}s, Hajnal, and Moon \cite{EHM} also introduced the notion of bipartite saturation. In particular, when $G$ and $H$ are both bipartite graphs, $G$ is $H$-saturated if $G$ does not contain $H$ but adding any edge across the bipartition creates a copy of $H$. Then, a natural objective is to find $\sat(K_{n,n},H)$, the minimum number of edges in an $H$-saturated $n$ by $n$ bipartite graph. They conjectured that $\sat(K_{n,n},K_{s,s}) = n^2 - (n-s+1)^2$, which is tight for the graph created by choosing $s-1$ vertices from both sides of the bipartite graph and connecting them to every vertex in the opposite side. A couple of years later, Wessel \cite{W} and Bollob\'as \cite{B67} independently confirmed the conjecture in a more general `ordered' setting. In particular, this variant imposes an extra condition by ordering the two vertex classes in $G$ and $H$ and requires that adding any missing edge creates a copy of $H$ respecting the order, i.e., the first class of $H$ should be in the first class of $G$. For example, let $K_{(s,t)}$ denote the complete `ordered' bipartite graph with $s$ vertices in the first class and $t$ vertices in the second class. Then, $G$ is called $K_{(s,t)}$-saturated if $G$ is $K_{(s,t)}$-free (but may contain a copy of $K_{(t, s)}$) and adding any missing edge across the bipartition creates a $K_{(s,t)}$. Wessel and Bollob\'as proved that $\sat(K_{n,n},K_{(s,t)}) = n^2 - (n-s+1)(n-t+1)$. Later, Alon \cite{A} generalized this to $k$-uniform hypergraphs in a $k$-partite setting. Furthermore, they showed that the ordered saturation number is the same as the ordered weak saturation number. We refer the readers to the informative survey \cite{FFS} to see other classical results in graph saturation.

As for the more natural unordered setting for bipartite saturation, this half-century-old problem only started receiving attention in the recent years (see, e.g., \cite{BBMR}, \cite{MS}). Moshkovitz and Shapira \cite{MS} showed that the unordered weak saturation number, $\wsat(K_{n,n},K_{s,t})$, is $(2s-2+o(1))n$ with $s\le t$ which is surprisingly significantly smaller (roughly by $(t-s)n$) than the corresponding ordered weak saturation number. One might think that there might exist a similar gap between the unordered and ordered saturation numbers as well. However, Moshkovitz and Shapira \cite{MS} conjectured that there is at most a constant additive gap:

\begin{conjecture}[\cite{MS}] \label{conjecture}
Let $1 \le s < t$ be fixed. Then, there exists an $N = N(s,t)$ such that for all $n \ge N$, we have the following: $$\sat(K_{n,n},K_{s,t}) = (s+t-2)n - \left\lfloor \left(\frac{s+t-2}{2}\right)^2 \right\rfloor.$$ 
\end{conjecture}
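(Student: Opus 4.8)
The plan is to prove matching upper and lower bounds; write $k:=s+t-2$ and $f(n):=kn-\lfloor k^2/4\rfloor$ for the conjectured value.

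\emph{Upper bound.} To get $\sat(K_{n,n},K_{s,t})\le f(n)$ I would use an explicit construction obtained by peeling off dominating vertices. Take $D_A\subseteq A$, $D_B\subseteq B$ with $|D_A|=|D_B|=s-1$, join every vertex of $D_A\cup D_B$ to the whole opposite class, and on the remaining $m:=n-s+1$ vertices of each class put the graph $H$ consisting of a completely joined ``block'' $L_A,L_B$ of size $\lfloor(t-s)/2\rfloor$ per side together with a $(t-s)$-regular bipartite graph between the complements of the blocks (this exists once $n\ge N(s,t)$). The key observation is that the resulting $G$ is $K_{s,t}$-free and $K_{s,t}$-saturated precisely when $H$ is $K_{1,t-s+1}$-free and $K_{1,t-s+1}$-saturated: any $s$-set in a class meets the complement of the dominating set, so the common neighbourhood of an $s$-set is controlled by $H$-degrees, and all non-edges of $G$ lie inside $H$; the displayed $H$ has maximum degree $t-s$ and its vertices of degree $\le t-s-1$ form the completely joined set $L_A\cup L_B$, so it is $K_{1,t-s+1}$-saturated. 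Counting, $G$ has $2(s-1)n-(s-1)^2$ edges at $D_A\cup D_B$ and $(t-s)(n-s+1)-\lfloor(t-s)^2/4\rfloor$ edges inside $H$; adding these and using $\lfloor(a+b)^2/4\rfloor=ab+\lfloor(b-a)^2/4\rfloor$ with $a=s-1$, $b=t-1$ gives exactly $f(n)$.

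\emph{Lower bound.} The bound of Gan, Kor\'andi, and Sudakov already yields $\sat(K_{n,n},K_{s,t})\ge f(n)-C$ for a constant $C=C(s,t)$, so what is needed is a stability statement that removes the error. I would first record the local structure forced by saturation: for a non-edge $uv$ the copy of $K_{s,t}$ created in $G+uv$ must use $uv$, and reading off which side of it each endpoint occupies gives $\min(\deg u,\deg v)\ge s-1$ and $\max(\deg u,\deg v)\ge t-1$; for $n$ large the first forces $\delta(G)\ge s-1$, the second forces the vertices of degree $\le t-2$ to split into completely joined parts $L_A\subseteq A$, $L_B\subseteq B$ (so $|L_A|,|L_B|\le t-2$), and $K_{s,t}$-freeness bounds the common neighbourhood of any $s$ vertices in a class by $t-1$. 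The heart of the argument is to show that saturation forces a ``dominating core'' of essentially $s-1$ near-full-degree vertices in each class: for a suitable vertex $u$ with many non-neighbours, each non-neighbour $v$ contributes --- via the $K_{s,t}$ created in $G+uv$ --- a set of at least $s-1$ neighbours of $v$ with a large common neighbourhood, and since common neighbourhoods of $s$-sets are capped at $t-1$ these structures cannot be spread thin, so an averaging/compactness argument confines their ``high side'' to a bounded set $C_A,C_B$; one then argues $|C_A|,|C_B|\ge s-1$ with the $C$-vertices of degree $n-O(1)$. Granting this, every vertex outside $L_A$ has degree $\ge t-1$, and optimising the resulting count --- exactly the optimisation $\max_{\ell}\ell(t-s-\ell)=\lfloor(t-s)^2/4\rfloor$ seen in the construction --- gives $e(G)\ge kn-(s-1)(t-1)-\lfloor(t-s)^2/4\rfloor=f(n)$, with equality pinning $G$ to the construction.

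\emph{Main obstacle.} The hard step is the dominating-core/stability argument, especially pushing the additive error all the way to $0$. The obstruction --- present only when $t-s\ge 2$ --- is that the low part $L_A,L_B$ is not rigidly placed: it can attach to the core in non-isomorphic ways and can absorb a bounded amount of slack in the edge count, so any crude argument leaks a constant. To overcome this I would run a fine exchange/compression argument: from a minimum-edge counterexample, reroute edges incident to the core and to the low part to bring $G$ to a canonical form, and check that canonical form has exactly $f(n)$ edges. When $t=s+1$ the low part is empty, the canonical form is unique --- $s-1$ dominating vertices per class plus a perfect matching on the complement --- and the exchange analysis is short, so the conjecture and the classification of the extremal graphs both follow; making the exchange analysis succeed in the presence of a nontrivial low part, for all $t-s\ge 2$, is the remaining obstacle.
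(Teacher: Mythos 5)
The statement you are proving is an open conjecture: the paper does not prove it either, and only \emph{confirms} it in the cases $s=t-1$ (Theorem \ref{main1}) and $(s,t)=(2,4)$ (Proposition \ref{k24}), while for general $s<t$ it only improves the known lower bound to $(s+t-2)n-(t-1)(t-2)-\lfloor (s-1)^2/4\rfloor$ (Theorem \ref{main2}), which is still strictly below the conjectured value whenever $s<t-1$. Your upper bound is fine --- it is exactly the Moshkovitz--Shapira construction described after Conjecture \ref{conjecture}, and your edge count is correct. The genuine gap is in the lower bound, and it sits precisely where you flag it: the ``dominating core / stability / exchange'' step is only a plan. You assert that saturation forces $s-1$ near-full-degree vertices per class and that ``optimising the resulting count'' yields $e(G)\ge (s+t-2)n-(s-1)(t-1)-\lfloor (t-s)^2/4\rfloor$, but no argument is given that actually confines the high side to a set of size $s-1$ per class, nor that the low-degree part must pay the quadratic term $\lfloor (t-s)^2/4\rfloor$; an averaging argument of the kind you invoke (this is essentially how \cite{GKS} find a core) loses an additive constant, which is exactly why the conjecture is open. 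Since you yourself state that making the exchange analysis work for $t-s\ge 2$ ``is the remaining obstacle,'' this is an acknowledged incomplete proof, not a proof.

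Two further concrete problems. First, even for $t=s+1$ your claim that ``the canonical form is unique --- $s-1$ dominating vertices per class plus a perfect matching on the complement'' is false: Theorem \ref{main1} shows the extremal graphs form the family $\F^n_{t-1,t}$, where only \emph{one} color class carries the $s-1$ dominating vertices and the structure on the other class is far from unique (the paper stresses that this abundance of asymmetric extremal examples is part of the difficulty). So an exchange argument aiming at a single canonical form cannot close the $t=s+1$ case; the paper instead proceeds by a minimum-degree dichotomy, locating a ``nice core'' (Lemma \ref{findcore}) and then a careful partition analysis around it (Lemma \ref{nicecore}). Second, your preliminary structural claims are fine as far as they go ($\min(\deg u,\deg v)\ge s-1$ for a non-edge, the low-degree vertices being completely joined across the bipartition), but they are also the easy part; they already appear in \cite{GKS} and in Section \ref{sec:prelim}, and they do not by themselves yield more than the known $(s+t-2)n-O_{s,t}(1)$ bound. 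To turn your proposal into a proof you would need to supply the missing quantitative stability argument for general $t-s\ge 2$, which is precisely the open problem.
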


In this setting, we often assume that $s < t$. Note that as previously mentioned, when $s=t$, the unordered saturation number is already known and is equal to the ordered saturation number.

Moshkovitz and Shapira \cite{MS} have also constructed the following class of $K_{s,t}$-saturated graphs that matches the number of edges given in the above conjecture. First choose $s-1$ vertices from both color classes and connect them to all the vertices from the other side. Then, pick $\ell = \left \lfloor \frac{t-s}{2} \right \rfloor$ additional vertices from each class and add all edges between them to form a $K_{\ell, \ell}$. Lastly, add edges amongst the remaining vertices such that they all end up with degree exactly $t-1$ while ensuring that $G$ does not contain a copy of $K_{s,t}$. Observe that any missing edge $e$ in $G$ is incident to a vertex $u$ with degree exactly $t-1$. Then, adding $e$ creates a copy of $K_{s,t}$ by using the endpoints of $e$ along with the $t-1$ neighbors of $u$ and the $s-1$ vertices of degree $n$ on the same side as $u$. With a simple counting, one can see that these graphs have $(s+t-2)n - \left\lfloor \left(\frac{s+t-2}{2}\right)^2 \right\rfloor$ edges. Note that when $s< t-1$, these graphs contain less edges than  $\sat\left(K_{n, n}, K_{(s,t)}\right)$, showing that $\sat\left(K_{n, n}, K_{s,t}\right) < \sat\left(K_{n, n}, K_{(s,t)}\right)$.

Gan, Kor\'andi, and Sudakov \cite{GKS} proved that $\sat(K_{n,n},K_{s,t}) \ge (s+t-2)n - (s+t-2)^2$, which is only smaller than the above conjecture by an additive constant. In addition, they gave another family of extremal graphs achieving the bound in Conjecture \ref{conjecture}; we encourage our readers to consult the third section of \cite{GKS} for an exact description of these extremal graphs. They also noted that the existence of such variety of examples (most of which are asymmetric) provides further evidence in the difficulties of this conjecture.

As mentioned in \cite{MS}, it is an easy exercise to check that Conjecture \ref{conjecture} is true when $s=1$. Another interesting case to investigate is the other extreme value of $s$, i.e., when $s=t-1$. In this context, the specific instance of $\sat\left( K_{n, n}, K_{2, 3}\right)$ was already determined by Gan, Kor\'andi, and Sudakov \cite{GKS}. We resolve this case by proving that Conjecture \ref{conjecture} is correct for any $s=t-1$, and providing the classification of all possible extremal graphs. In order to describe the extremal structures, we define $\F^n_{s,t}$ to be the collection of $K_{s,t}$-free $n$ by $n$ bipartite graphs where one of the color classes contains a set $S$ of $s-1$ vertices such that $S$ is connected to all vertices on the other side and all other vertices on the same side as $S$ have degree exactly $t-1$. It is a simple exercise to check that these graphs are $K_{s,t}$ saturated (in fact, each graph is also ordered saturated) and they contain $(s+t-2)n-(s-1)(t-1)$ number of edges. Then, we prove the following:

\begin{theorem} \label{main1}
Let $t > 1$ be fixed. Then, there exists an $N = N(t)$ such that for all $n \ge N$, we have the following: $$\sat(K_{n,n},K_{t-1,t}) = (2t-3)n - (t-1)(t-2).$$
Moreover, a graph $G$ is an extremal graph achieving the equality if and only if $G \in \F^n_{t-1,t}$.
\end{theorem}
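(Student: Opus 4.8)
The plan is to handle the upper bound, lower bound, and characterization together. The upper bound is immediate from $\F^n_{t-1,t}$: for every large $n$ this family is nonempty — put $t-2$ vertices of one class (the set $S$) adjacent to the entire opposite class and make every other vertex of that class have degree exactly $t-1$, using each $(t-1)$-subset of the opposite class as a neighborhood at most $t-2$ times so as to stay $K_{t-1,t}$-free — and, as noted in the introduction, every member of $\F^n_{t-1,t}$ is $K_{t-1,t}$-saturated with exactly $(2t-3)n-(t-1)(t-2)$ edges, so $\sat(K_{n,n},K_{t-1,t})\le(2t-3)n-(t-1)(t-2)$. For the reverse inequality and the classification I would take any $K_{t-1,t}$-saturated $G$ on classes $A,B$ of size $n$ with $e(G)\le(2t-3)n-(t-1)(t-2)$ and prove that equality holds and $G\in\F^n_{t-1,t}$.

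First come the local facts. Adding any non-edge $uv$ produces a copy of $K_{t-1,t}$ with parts $X$ of size $t-1$ and $Y$ of size $t$ distributed across $A$ and $B$, with $u,v$ in different parts; deleting the new edge, the endpoint of $\{u,v\}$ in $X$ stays adjacent to all of $Y$ but one vertex, and the endpoint in $Y$ stays adjacent to all of $X$ but one vertex. In particular every vertex incident to a non-edge has degree at least $t-2$, so $\delta(G)\ge t-2$; and since $e(G)<(2t-3)n$, there is also a vertex of degree at most $2t-4$. Finally, $K_{t-1,t}$-freeness says no $t-1$ vertices of a single class have $t$ or more common neighbors and no $t$ vertices of a single class have $t-1$ or more common neighbors; in particular each class contains at most $t-2$ \emph{huge} vertices, a vertex being huge if its degree is at least $n-t+1$.

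The main case is when $G$ has a vertex of degree $t-2$, say $v\in A$; put $X'=N(v)$ and let $W=\bigcap_{x\in X'}N(x)\subseteq A$ be the common neighborhood of $X'$. Since adding any edge at $v$ cannot push $d(v)$ up to $t-1$, the vertex $v$ must occupy the size-$t$ part $Y$ of the created copy for every non-neighbor $u$, and unravelling this forces $N(v)=X\setminus\{u\}$, $Y\subseteq W$, and that the $t-1$ vertices $Y\setminus\{v\}$ of $W$ are all adjacent to $u$; together with freeness this gives that \emph{exactly $t-1$ vertices of $W$ are adjacent to $u$, for every non-neighbor $u$ of $v$}. Double counting the edges between $W$ and $B\setminus X'$ yields $\sum_{w\in W}\bigl(d(w)-(t-2)\bigr)=(t-1)(n-t+2)$, whence
\[
e(G)=\sum_{a\in A}d(a)\ \ge\ \sum_{w\in W}d(w)+(t-2)\,|A\setminus W|\ =\ (t-1)(n-t+2)+(t-2)\,n\ =\ (2t-3)n-(t-1)(t-2).
\]
So the claimed value is attained, and equality forces every vertex of $A\setminus W$ to have degree exactly $t-2$; iterating the identity on those vertices, invoking the symmetric statement for degree-$(t-2)$ vertices of $B$, and using saturation at the non-edges through $X'$, I would then force $W=A$ (so $X'$ is joined to all of $A$) and every vertex of $B\setminus X'$ to have degree exactly $t-1$ — that is, $G\in\F^n_{t-1,t}$ with $S=X'$.

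The remaining case — $G$ has no vertex of degree $t-2$, so $\delta(G)\ge t-1$ — is where I expect the real work to be, and it is the main obstacle. Here a low-degree vertex (there is one of degree at most $2t-4$) can occupy \emph{either} part of the $K_{t-1,t}$ it helps create, so one no longer gets the rigid picture above. The approach I would take is: for such a vertex $v$ and each non-neighbor $u$, the created copy exhibits either $t-2$ vertices on $v$'s side that together with $v$ carry the size-$(t-1)$ part and are adjacent to $u$, or $t-2$ large vertices among $N(v)$; show that a vertex of the first type for all of its non-neighbors forces a fixed $(t-2)$-set of huge vertices adjacent to \emph{all} of its non-neighbors, hence equal to the set $S$ of huge vertices of that class; and then trade off the second-type contributions (each using up $t-2$ edges into the at-most-$t-2$ huge vertices of the opposite class) against the scarcity of surplus degree implied by $e(G)\le(2t-3)n-(t-1)(t-2)$. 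The delicate points are (a) carrying out this trade-off tightly enough that the additive constant does not lose a factor of two, and (b) in the extremal regime, upgrading ``$S$ is huge'' to ``$S$ is universal'' and showing all other vertices on the $S$-side have degree exactly $t-1$, so that $G\in\F^n_{t-1,t}$. The upper bound, the case $\delta(G)=t-2$ above, and the converse that every member of $\F^n_{t-1,t}$ is extremal are all routine by comparison.
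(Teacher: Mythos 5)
Your treatment of the minimum-degree-$(t-2)$ case is correct and is essentially a cleaner repackaging of the paper's Propositions in Section 2: taking $W$ to be the full common neighborhood of $N(v)$ (rather than the union of witness sets $S_{u'}$) and invoking $K_{t-1,t}$-freeness to pin the count at exactly $t-1$ neighbors in $W$ per non-neighbor of $v$ is a nice tightening, and the edge count it produces is the right one. (Two smaller remarks: in the upper-bound construction, ``each $(t-1)$-subset used at most $t-2$ times'' is too lax for $t\ge 4$ --- two non-$S$ vertices with identical $(t-1)$-neighborhoods together with the $t-2$ vertices of $S$ already yield a $K_{t,t-1}\cong K_{t-1,t}$ --- so you need ``at most once''; and your classification sketch via ``iterating the identity'' is vaguer than the paper's direct argument, which adds a well-chosen non-edge $uu'$ and locates a $K_{t-1,t}$ already present in $G$.)

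The genuine gap is exactly where you flag it: the case $\delta(G)\ge t-1$. Your sketch there does not hold up as written. When a non-edge $vu$ is added and $v$ sits in the size-$t$ part, the $t-2$ vertices of $N(v)$ that appear in the created copy are only guaranteed degree $\ge t$; nothing forces them to be ``huge'' (degree $\ge n-t+1$), so the proposed trade-off against the at-most-$(t-2)$ huge vertices of the opposite class does not get off the ground. It is also unclear how the trade-off, even if repaired, would recover the exact additive constant $(t-1)(t-2)$ rather than something weaker of the GKS type. The paper's route is structurally different and is the bulk of the work: Section 3 shows (via a chain of refinements of the high-degree sets $V_0,V_0'$, using Observations \ref{bigsmall}--\ref{threepathlarge} repeatedly) that $G$ either contains a \emph{nice core} --- a pair $A\cup A'$ of $(t-1)$-sets with $a_0,a_0'$ whose full neighborhoods are $A'$ and $A$, and with a $K_{s,t-1}$ inside --- or $G$ is already in $\F^n_{s,t}$; and Section 4 then partitions each side into $A,B_1,B_2,C_1,C_2$ around the core, uses the two structural observations about $C$ and $C_2\times C_2'$, and closes with a three-way case analysis (Claims \ref{three}--\ref{case1}) to get both the exact lower bound and the extremal characterization. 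Without an argument of comparable precision, the $\delta\ge t-1$ branch of your proof is missing.
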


We also improve the lower bound of Gan, Kor\'andi, and Sudakov \cite{GKS} for general $s$ and $t$.

\begin{theorem} \label{main2}
Let $1 \le s < t$ be fixed. Then, there exists an $N = N(s,t)$ such that for all $n \ge N$, we have the following: $$\sat(K_{n,n},K_{s,t}) \ge (s+t-2)n - (t-1)(t-2) - \left\lfloor \frac{(s-1)^2}{4}\right\rfloor.$$
\end{theorem}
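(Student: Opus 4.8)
=== PROOF PROPOSAL ===

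\noindent\textbf{Overview of the approach.}
The plan is to take a $K_{s,t}$-saturated $n$ by $n$ bipartite graph $G$ with the minimum number of edges and to bound $e(G)$ from below by a careful local analysis of low-degree vertices. The key structural fact we will exploit is the one behind the constructions: if $uv$ is a non-edge of $G$, then adding $uv$ creates a $K_{s,t}$, so there is a copy of $K_{s,t}$ in $G+uv$ using $uv$; consequently either $u$ lies in the size-$s$ side with $t-1$ common neighbours among $N(v)$, or $v$ lies in the size-$s$ side with $t-1$ common neighbours among $N(u)$ (and symmetrically with the roles of the two sides of $K_{s,t}$). Thus every non-edge is ``witnessed'' by one of its endpoints having degree at least $t-1$ and sharing $t-1$ neighbours appropriately, or by one endpoint sitting in a small dense core. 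We write $A$ and $B$ for the two colour classes, and let $A_{\mathrm{low}}$ (resp.\ $B_{\mathrm{low}}$) be the vertices of degree at most $t-2$ in each class; these are exactly the vertices that force nontrivial structure on the other side.

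\noindent\textbf{Main steps.}
First I would show that the set of vertices of ``high'' degree (say degree $\ge$ some constant $D=D(s,t)$) is small, of size bounded by a function of $s,t$ only; this follows because two high-degree vertices on the same side would, together with suitable common neighbours, already build a $K_{s,t}$ (using that $n$ is large), contradicting $K_{s,t}$-freeness. Let $A_{\mathrm{high}}, B_{\mathrm{high}}$ denote these sets and write $a=|A_{\mathrm{high}}|$, $b=|B_{\mathrm{high}}|$. Second, for a vertex $v \in B$ of degree at most $t-2$, every non-neighbour $u$ of $v$ in $A$ must be ``responsible'' for the non-edge $uv$: since $\deg(v) \le t-2 < t-1$ and $\deg(v)\le t-2<t$, $v$ cannot be on either side of the created $K_{s,t}$ in its natural role, so $u$ must be, forcing $\deg(u) \ge s-1$ with the relevant common-neighbourhood condition; pushing this through, essentially all of $A$ other than a bounded set must have degree at least $t-1$, OR $v$'s few neighbours must form part of a bounded dense structure. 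The third and central step is a weighting/discharging argument: assign to each vertex $v$ the quantity $\deg(v) - (t-1)$ if $v \notin A_{\mathrm{high}}\cup B_{\mathrm{high}}$, handle the high-degree vertices separately, and show that the total deficiency $\sum_v \max(0, (t-1)-\deg(v))$ coming from low-degree vertices is at most $(t-1)(t-2) + \lfloor (s-1)^2/4 \rfloor$. The $\lfloor (s-1)^2/4\rfloor$ term should emerge as the maximum size of a ``mixed'' bipartite configuration (a $K_{p,q}$ with $p+q \le s-1$, optimized) that the low-degree vertices on both sides can simultaneously belong to without creating $K_{s,t}$, exactly mirroring the $K_{l,l}$ in the Moshkovitz--Shapira construction but now with the constraint $p+q\le s-1$ instead of being free; the extremal split is $p=\lfloor(s-1)/2\rfloor$, $q=\lceil(s-1)/2\rceil$. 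The $(t-1)(t-2)$ term should come from a single side: a bounded set of ``deficient'' vertices clustered around the $s-1$ near-universal vertices, whose total degree deficit is maximized by the $\F^n_{t-1,t}$-type configuration.

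\noindent\textbf{Assembling the bound.}
Once the deficiency is controlled, I would conclude via $e(G) = \tfrac12\sum_{v} \deg(v) \ge \tfrac12\big( 2n(t-1) - 2\,(\text{total deficiency}) + (\text{surplus from high-degree vertices})\big)$; more carefully, summing degrees over $A$ gives $\sum_{v\in A}\deg(v) \ge (t-1)n - (\text{deficiency on }A\text{-side})$ and likewise for $B$, and since $e(G) = \sum_{v\in A}\deg(v)$, one gets $e(G) \ge (t-1)n - \text{def}(A)$ and $e(G)\ge (t-1)n - \text{def}(B)$; but we actually want the coefficient $s+t-2$, so the real accounting must combine a lower bound of $(t-1)$ on almost all degrees with an extra $+(s-1)$ coming from the $s-1$ near-universal vertices forced on (at least) one side, or from an amortized contribution of the common-neighbour witnesses. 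Concretely, I expect to prove: there is a side, say $A$, containing a set $S$ of at least $s-1$ vertices of degree $\ge n - O_{s,t}(1)$, contributing $(s-1)n - O_{s,t}(1)$ beyond the base; the remaining vertices of $A$ have degree $\ge t-1$ up to total deficiency $\le (t-1)(t-2)+\lfloor(s-1)^2/4\rfloor$. Summing yields $e(G) = \sum_{v\in A}\deg(v) \ge (s-1)n + (t-1)(n-(s-1)) - (t-1)(t-2) - \lfloor(s-1)^2/4\rfloor + O(1)$, and since everything but the stated terms is either nonnegative or absorbed, we obtain $e(G) \ge (s+t-2)n - (t-1)(t-2) - \lfloor (s-1)^2/4\rfloor$ once $n$ is large enough to swallow lower-order errors.

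\noindent\textbf{The main obstacle.}
The hard part will be the third step: ruling out exotic configurations in which the low-degree vertices on the two sides interact in a way that simultaneously keeps many degrees small without creating $K_{s,t}$ and without forcing the full set of $s-1$ near-universal vertices on a single side. In particular, one must show that you cannot ``split'' the $s-1$ universal-type vertices between the two sides more cheaply than the claimed bound allows, and that the dense interaction among low-degree vertices is genuinely capped by a $K_{p,q}$ with $p+q \le s-1$ — this requires a delicate case analysis of which vertices can play the $s$-side versus the $t$-side role in the $K_{s,t}$'s created by inserting the various missing edges, and carefully tracking how these roles constrain common neighbourhoods. Bounding the interaction between $A_{\mathrm{high}}$ and $B_{\mathrm{high}}$ and showing it contributes only $O_{s,t}(1)$ (absorbed for large $n$) is technical but routine by comparison.
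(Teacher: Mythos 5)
Your outline has the right general flavour (separate high- and low-degree vertices, analyse the copies created by inserting missing edges, recover $\lfloor(s-1)^2/4\rfloor$ from a quadratic optimisation), but two of the steps you do commit to are wrong, and the one step you identify as "the main obstacle" is in fact the entire content of the theorem. First, your Step 1 is based on a false claim: two vertices of large degree on the same side need not have \emph{any} common neighbours, so they do not "build a $K_{s,t}$", and no such argument can bound the high-degree set by $O_{s,t}(1)$. Even the trivial degree-sum bound only gives $O(n/D)$ vertices of degree $\ge D$, which is linear in $n$ for constant $D$. The paper instead takes the threshold $n^{1/4}$, obtains a set $V_0$ of size $O(n^{3/4})$ from the edge count alone, and designs all subsequent arguments (the "distance more than $3$" observations) to tolerate an exceptional set of that polynomial size; your later steps implicitly rely on the exceptional set being of bounded size, so they would not survive the correction. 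Relatedly, in Step 2 the assertion that a vertex $v$ of degree $\le t-2$ "cannot be on either side of the created $K_{s,t}$" is false: $v$ is an endpoint of the added edge, so it must lie in the created copy, and it can perfectly well sit on the $t$-side (needing only $s-1$ old neighbours); the correct and useful conclusion is that $v$'s low degree forces the \emph{other} endpoint onto the side requiring $t-1$ old neighbours, which is how Proposition 2.2 of the paper actually proceeds.

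Second, the discharging bound of Step 3 --- that the total degree deficiency below $t-1$ is at most $(t-1)(t-2)+\lfloor(s-1)^2/4\rfloor$ --- is asserted, not proved, and it is precisely where all the work lies. The paper's route is structurally different: it first disposes of minimum degree $<t-1$ directly (obtaining the stronger bound $(s+t-2)n-(t-1)(t-2)$ with no $\lfloor(s-1)^2/4\rfloor$ loss), and for minimum degree $\ge t-1$ it proves that $G$ either lies in the explicit extremal family $\F^n_{s,t}$ or contains a \emph{nice core}: a $(t-1)\times(t-1)$ vertex set containing two vertices whose entire neighbourhoods lie inside it, together with a $K_{s,t-1}$. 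The term $\lfloor(s-1)^2/4\rfloor$ then emerges from the partition around the core (Claim 4.8, following Gan--Kor\'andi--Sudakov): the sets $C_2,C_2'$ of vertices with few neighbours outside $B_2',B_2$ must span a complete bipartite graph, and the trade-off $(s-1)(n-t+1-y)+y^2$ with $y=|C_2|$ is minimised at $y\approx(s-1)/2$. Your heuristic for the source of this term is in the right spirit but is not a substitute for exhibiting and exploiting the forced $C_2\times C_2'$ structure. Finally, your assembly step ends with "$+O(1)$ \dots swallowed once $n$ is large": this cannot work, because the error terms and the correction terms $(t-1)(t-2)+\lfloor(s-1)^2/4\rfloor$ are both constants independent of $n$, and the theorem is exactly a statement about that additive constant; no amount of enlarging $n$ absorbs a constant error into a bound that is tight up to constants.
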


This paper is organized in the following way. In Section \ref{sec:prelim}, we start with the easy case of proving Theorems \ref{main1} and \ref{main2} with the assumption that the minimum degree is less than $t-1$. This will also showcase some proof strategies and contain certain definitions used throughout the paper. Section \ref{sec:asymp} contains the heart of the paper, where we deduce that any extremal graph with minimum degree at least $t-1$ contains a `nice' substructure (formally defined in Section \ref{sec:prelim}). In Section \ref{sec:core}, we analyze the obtained nice substructure and finish the proof of Theorems \ref{main1} and \ref{main2}. Finally, in Section \ref{sec:smallcase}, we verify Conjecture \ref{conjecture} for the next open case of $K_{2,4}$-saturation. Lastly, we end with a few concluding remarks. 

\section{Preliminaries}\label{sec:prelim}

In this section, our main goal is to prove a relatively easier instance of Theorems \ref{main1} and \ref{main2}, where we make the additional assumption that the graph has minimum degree less than $t-1$. These short proofs also highlight certain techniques that can be used for the general case. They also motivate us to define and search for certain nice structures (cores) in the general graph which is the strategy we employ in the later sections. Thus, let us assume that $1 \le s < t$, $n$ is sufficiently large, and $G$ is a $K_{s,t}$-saturated bipartite graph with vertex classes $U$ and $U'$ of size $n$. This means that $G$ does not contain a copy of $K_{s,t}$, but adding any missing edge between $U$ and $U'$ creates a copy of $K_{(s,t)}$ or $K_{(t,s)}$. Here $K_{(a,b)}$ refers to a complete bipartite graph with $a$ vertices in $U$ and $b$ vertices in $U'$. 

We start with proving Theorem \ref{main2} in the scenario when $G$ has minimum degree less than $t-1$.  The result actually follows immediately from the argument used to prove Proposition 2.1 in \cite{GKS}. We repeat the same proof for the sake of completeness and to demonstrate certain proof strategies that are used later throughout the paper. 
 
\begin{proposition} \label{mindeg}
Suppose a $K_{s,t}$-saturated graph $G$ has minimum degree $\d < t-1$. Then $G$ contains at least $(s+t-2)n - (t-1)(t-2)$ edges.  
\end{proposition}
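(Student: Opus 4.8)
The plan is to exploit the vertex $v$ of minimum degree $\d < t-1$ and use the saturation property to force all non-neighbors of $v$ on the same side to have large degree. Concretely, suppose $v \in U'$ with $d(v) = \d$. For any vertex $u \in U$ that is not adjacent to $v$, adding the edge $uv$ must create a copy of $K_{s,t}$. Since $v$ only has $\d < t-1$ neighbors after the addition ($\d+1 \le t-1$), $v$ cannot play the role of one of the $t$ vertices in the part of size $t$; hence in the new copy $v$ must be one of the $s$ vertices on the $U'$-side, and $u$ must be one of the $t$ vertices on the $U$-side. In particular $u$ together with $t-1$ other vertices of $U$ must have $s$ common neighbors in $U'$ including $v$, so $u$ has at least $s-1$ neighbors among $N(v)$, and moreover those $s-1$ neighbors plus $v$ are all adjacent to some set of $t-1$ vertices in $U$. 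The first consequence I want to extract is that almost every vertex of $U$ has at least $s-1$ neighbors inside the fixed $\d$-set $N(v)$.

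Next I would set up the edge count. Let $W = N(v) \subseteq U$, $|W| = \d$. Every vertex $u \in U \setminus (W \cup N_G(v))$ — in fact every $u \in U$ with $uv \notin E(G)$ — sends at least $s-1$ edges into $W$; there are at least $n - \d$ such vertices (those not adjacent to $v$), actually $n$ minus the at most $\d$ neighbors of $v$ in... wait, $N(v) \subseteq U$ already, so the non-neighbors of $v$ in $U$ number exactly $n - \d$, and every one of them has $\ge s-1$ neighbors in $W$. That already gives many edges incident to $W$. But to reach the bound $(s+t-2)n - (t-1)(t-2)$ I also need the $t-1$ direction: I claim that, apart from a bounded number of exceptional vertices, every vertex of $U'$ has degree at least $t-1$. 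Indeed, if $w \in U'$ had degree $\le t-2$, then by the same argument applied with $w$ in place of $v$, every non-neighbor of $w$ in $U$ would have $\ge s-1$ neighbors in $N(w)$. The way to combine these: count edges as $e(G) = \sum_{u \in U} d(u)$. I will partition $U$ (or $U'$) into the low-degree vertices (degree $\le t-2$) and the rest, show the low-degree vertices are all "attached" through small common neighborhoods forced by saturation, and show the total deficiency $\sum (t-1 - d(u))^+$ over the whole graph is at most $(t-1)(t-2) + O(1)$-ish, using that the $W$-side already contributes roughly $(s-1)n$ extra edges.

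A cleaner way to organize the computation, which I expect the paper does: fix the minimum-degree vertex $v \in U'$, $W = N_G(v)$ with $|W| = \d \le t-2$. Bound $e(G)$ from below by counting (i) edges incident to $U' \setminus \{v\}$ and (ii) edges from $U \setminus$(neighbors of $v$) into $W$, being careful about double-counting edges that go between $W$ and $v$ or between $W$ and $U'$. The inequality to prove reduces to showing
\[
e(G) \;\ge\; (t-2)(n-1) \;+\; \d \;+\; (s-1)(n-\d) \;-\; (\text{correction}),
\]
and then checking $(t-2)(n-1) + \d + (s-1)(n-\d) \ge (s+t-2)n - (t-1)(t-2)$ for $\d \le t-2$, which is where the $-(t-1)(t-2)$ term comes from: the worst case is $\d = t-2$, giving exactly $(s+t-2)n - (s-1)(t-2) - (t-2)$, and one then absorbs the remaining slack. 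The one subtlety is to make sure the "$s-1$ neighbors in $W$" are genuinely extra, i.e.\ not already counted among the low-degree vertices of $U'$; this is handled because vertices of $W$ may themselves be anything, but edges from $U \setminus N(v)$ into $W$ are counted from the $U$-side while the degree sum over $U' \setminus \{v\}$ is counted from the $U'$-side, and each edge has one endpoint in each class, so $e(G) \ge \max$ of the two counts is not enough — rather $e(G) \ge$ (edges incident to $U'\setminus\{v\}$) $+$ (edges incident to $v$), and the first of these is bounded below using both the degree-$\ge t-1$ vertices of $U'$ and the forced structure. The main obstacle, then, is bookkeeping: correctly accounting for the vertices of small degree in $U'$ and showing their "deficiency" relative to $t-1$ is compensated by the $\ge s-1$ edges each of the many vertices in $U$ sends into $W$, without double counting, so that the net bound is exactly $(s+t-2)n - (t-1)(t-2)$. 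Everything else is the routine saturation argument and a one-line optimization over $\d \in \{1,\dots,t-2\}$.
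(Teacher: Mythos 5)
There is a genuine gap: your role analysis of the created copy is backwards, and the resulting structural claims violate bipartiteness. If $v\in U'$ has degree $\delta\le t-2$, then after adding $uv$ the vertex $v$ has at most $t-1$ neighbors, so $v$ \emph{cannot} be one of the $s$ vertices of the copy (those must be adjacent to all $t$ vertices on the other side, hence need degree $\ge t$); it \emph{must} be one of the $t$ vertices (which only requires degree $\ge s$). So $u$ lies in the part of size $s$ and $v$ in the part of size $t$ --- the opposite of what you assert. The correct consequences are then: (a) the other $s-1$ vertices of the copy in $U$ lie in $N(v)$, so each of the $t-1$ vertices of the copy in $U'\setminus\{v\}$ has at least $s-1$ neighbors in $N(v)$; and (b) $u$ itself is adjacent to those $t-1$ vertices, so every $u\in U\setminus N(v)$ has degree at least $t-1$. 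Your claim that ``$u$ has at least $s-1$ neighbors among $N(v)$'' cannot be right as stated, since $u$ and $N(v)$ are both in $U$ and the graph is bipartite; the set of vertices with $\ge s-1$ neighbors in $N(v)$ lives in $U'$ (it is the union $V'$ of the $(t-1)$-sets from the created copies), not in $U$. Relatedly, your claim that all but boundedly many vertices of $U'$ have degree $\ge t-1$ does not follow from applying the argument to a low-degree $w\in U'$: that yields information about $U$, not about $U'$.

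The bookkeeping also falls short quantitatively. The paper's count partitions the edge set into three \emph{disjoint} classes: edges between $V'$ and $N(v)$ (at least $(s-1)|V'|$ by (a)), edges between $U\setminus N(v)$ and $V'$ (at least $(t-1)(n-\delta)$ by (b), counted from the $U$ side), and edges incident to $U'\setminus V'$ (at least $\delta(n-|V'|)\ge (s-1)(n-|V'|)$ by the minimum degree); summing and using $\delta\le t-2$ gives exactly $(s+t-2)n-(t-1)(t-2)$. The point is that essentially every vertex of one class contributes $t-1$ edges avoiding $N(v)$, while essentially every vertex of the other class contributes $s-1$ further edges, with no overlap. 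Your proposed bound $(t-2)(n-1)+\delta+(s-1)(n-\delta)$ equals $(s+t-3)n-(s-1)(t-2)$ at $\delta=t-2$, not $(s+t-2)n-(s-1)(t-2)-(t-2)$ as you compute; it is short of the target by roughly $n$, precisely because one side is only credited with $t-2$ edges per vertex rather than extracting $(t-1)+(s-1)$ per ``column'' via the disjoint decomposition above. The double-counting discussion at the end does not repair this.
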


\begin{proof}
Since adding any edge creates a copy of $K_{s,t}$, the minimum degree of $G$ is at least $s-1$, implying $s-1 \le \d < t-1$. Let $u_0$ be a vertex of degree $\d$ and $N(u_0)$ be its neighborhood. Without loss of generality, we can assume that $u_0 \in U$. Since $u_0$ has less than $t-1$ neighbors, for any vertex $u' \in U' \setminus N(u_0)$, adding an edge $u_0u'$ to $G$ creates a copy of $K_{(t,s)}$ instead of $K_{(s, t)}$. Thus given such vertex $u'\in U'\setminus N(u_0)$, let $S_{u'}$ be the set of $t-1$ vertices in $U\setminus u_0$ which is in a created copy of $K_{(t,s)}$. Note that the vertices in $S_{u'}$ also  have $s-1$ common neighbors with $u_0$. Define $V \subseteq U$ to be the union of these $S_{u'}$'s. Then all the vertices in $V$ are adjacent to at least $s-1$ neighbors in $N(u_0)$, and all the vertices in $U' \setminus N(u_0)$ are adjacent to at least $t-1$ vertices in $V$. Now, we count the number of edges in $G$ in the following manner. For two disjoint sets of vertices $A$ and $B$, we write $e(A, B)$ to denote the number of edges between $A$ and $B$.

\begin{align}
    e(U, U') &= e(V, N(u_0)) + e(V, U' \setminus N(u_0)) + e(U \setminus V, U') \nonumber\\
    &\ge (s-1)|V| + (t-1)(n-\d) + \d(n-|V|) \label{match}\\
    &\ge (s-1)|V| + (t-1)(n-t+2) + (s-1)(n-|V|) \nonumber\\
    &= (s+t-2)n - (t-1)(t-2), \nonumber
\end{align}
proving the proposition. 
\end{proof}

Note that the proof relied on finding a very large set ($U'\setminus N(u_0)$) with the property that the large set contains vertices of degree at least $t-1$ while each vertex on the other side can still find at least $s-1$ edges somewhat avoiding this large set. In later sections, we generalize this idea by finding two large sets (analogous to $V$ and $U'\setminus N(u_0)$) with the property that one large set (e.g. $U'\setminus N(u_0)$) has degree at least $t-1$ and the second large set has at least $s-1$ neighbors outside of the first large set (e.g. between $V$ and $N(u_0)$). This will asymptotically make up for most of the edges and thus a common strategy throughout the paper is to identify these two large sets and their complementary small sets. Note that any additional edges between the two small sets will help with the lower bound, thus we are motivated to make them as small and dense as possible. In addition, the above proof found the large sets $U'\setminus N(u_0)$ and $V$ with the desirable properties by adding edges between a well-chosen vertex $u_0$ and vertices in the large set on the opposite side. This works only if the edge is not in $G$ to begin with, suggesting that there should be a vertex ($u_0$) whose entire neighbourhood is inside the small set on the other side. This motivates the following definition of a \textit{core} which was first introduced in \cite{GKS}. 

\begin{definition}
    A \textit{core} is a subgraph induced by a vertex-set $A\cup A'$ with $A\subseteq U$, $A'\subseteq U'$, and there exist $a_0\in A$ and $a_0'\in A'$ such that their neighborhoods are also contained in $A\cup A'$.
\end{definition}

In \cite{GKS}, a core of $G$ was found by a simple averaging argument. In our paper, majority of the effort is spent on finding a `nice' core that is small and dense. This is enough to improve the lower bound provided by the structural arguments in \cite{GKS}. To achieve this, we need $n$ to be sufficiently large in terms of $s$ and $t$. Our strategy is to look for the following nice core in $G$:

\begin{definition}
    A \textit{nice core} is a core containing a copy of $K_{s,t-1}$ satisfying the following properties: there exist $a_0\in A$ and $a_0'\in A'$ with $N(a_0)=A'$, $N(a_0')=A$, and $|A|=|A'|=t-1$. 
\end{definition}

 In Section \ref{sec:asymp}, we argue that if $G$ has minimum degree at least $t-1$ and does not have a nice core, then $G$ already has at least $(s+t-2)n - (s-1)(t-1)$ edges. In the subsequent section, we deal with the case when $G$ contains a nice core by using similar structural arguments as those in \cite{GKS}.

Now, we further analyze the equations in the proof of Proposition \ref{mindeg} in order to classify the extremal structures when $G$ has minimum degree less than $t-1$ and $s=t-1$.

\begin{proposition}
     A $K_{t-1, t}$-saturated graph $G$ with minimum degree less than $t-1$ has minimum number of edges if and only if $G$ is a graph in $\F_{t-1,t}^n$ with minimum degree $t-2$. 
\end{proposition}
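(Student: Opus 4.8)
The plan is to read off exactly when the inequalities in the proof of Proposition~\ref{mindeg} become equalities (for $s=t-1$), and then use the $K_{t-1,t}$-saturation of $G$ to rigidify the resulting picture. One direction is immediate: since each $G\in\F^n_{t-1,t}$ is $K_{t-1,t}$-saturated with exactly $(2t-3)n-(t-1)(t-2)$ edges, if such a $G$ additionally has minimum degree $t-2<t-1$, then it is a $K_{t-1,t}$-saturated graph of minimum degree below $t-1$ whose number of edges meets the bound of Proposition~\ref{mindeg}, hence it is extremal. So the substance is the reverse implication. (We may assume $t\ge3$; for $t=2$ the statement is vacuous, as every $K_{1,2}$-saturated graph is a perfect matching and so has minimum degree $1$.)

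So let $G$ be $K_{t-1,t}$-saturated with $\delta(G)<t-1$ and $e(G)=(2t-3)n-(t-1)(t-2)$. As in Proposition~\ref{mindeg} we have $\delta(G)\ge t-2$, so $\delta(G)=t-2$; fix $u_0$ of degree $t-2$, which after possibly swapping the two sides we may take in $U$, put $S:=N(u_0)$ (so $|S|=t-2$), and let $V=\bigcup_{u'\in U'\setminus S}S_{u'}$ be the set constructed in that proof. Since $s-1=t-2=|S|=\delta$, equality throughout \eqref{match} and the two lines after it forces: \textbf{(a)} every $v\in V$ is adjacent to \emph{all} of $S$; \textbf{(b)} every $u'\in U'\setminus S$ has exactly $t-1$ neighbours in $V$; \textbf{(c)} every vertex of $U\setminus V$ (in particular $u_0$) has degree exactly $t-2$. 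Combining \textbf{(a)} and \textbf{(c)}: a vertex of $U$ not adjacent to all of $S$ must lie in $U\setminus V$ and hence have degree exactly $t-2$, while any vertex of $U$ of degree at least $t-1$ lies in $V$ and so is adjacent to all of $S$.

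The key step is to upgrade this to ``$S$ is complete to $U$''. Suppose not, and choose $u\in U$ and $s_0\in S$ with $u\not\sim s_0$; then $\deg(u)=t-2$. Adding $us_0$ creates a copy of $K_{t-1,t}$ using this edge; since $\deg(u)+1=t-1<t$, vertex $u$ cannot lie on the size-$(t-1)$ side of that copy, so it lies on the size-$t$ side, and the remaining $t-1$ vertices $b_1,\dots,b_{t-1}\in U\setminus\{u\}$ of that side are each adjacent in $G$ to the whole opposite side, which therefore equals $N(u)\cup\{s_0\}$ (of size $t-1$). Each $b_i$ then has degree at least $t-1$, so $b_i\in V$, so $b_i$ is adjacent to all of $S\cup N(u)$. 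If $|S\cup N(u)|\ge t$, then $b_1,\dots,b_{t-1}$ together with any $t$ vertices of $S\cup N(u)$ already form a $K_{t-1,t}$ in $G$, a contradiction. Otherwise $|S\cup N(u)|=t-1$; then $|S\cap N(u)|=t-3$, so $N(u)$ omits exactly one vertex of $S$ (which must be $s_0$) and contains exactly one vertex $x\in U'\setminus S$, i.e.\ $N(u)=(S\setminus\{s_0\})\cup\{x\}$. Now pick any $u'\in U'\setminus(S\cup\{x\})$ (possible since $n$ is large) and add $uu'$ instead; the same reasoning yields $t-1$ vertices $b_1',\dots,b_{t-1}'\in V$ each adjacent to $N(u)\cup\{u'\}$, hence --- being in $V$ --- to all of $S\cup\{x,u'\}$, a set of $t$ vertices; so a $K_{t-1,t}$ again lies in $G$, a contradiction. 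Hence $S$ is complete to $U$.

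Finally I would close with an edge count from the $U'$ side: the $t-2$ vertices of $S$ each have degree $n$, and by \textbf{(b)} every vertex of $U'\setminus S$ has degree at least $t-1$, so $e(G)\ge(t-2)n+(t-1)(n-t+2)=(2t-3)n-(t-1)(t-2)$; equality then forces every vertex of $U'\setminus S$ to have degree exactly $t-1$. Thus $G$ is $K_{t-1,t}$-free, has a set $S\subseteq U'$ of $t-2=(t-1)-1$ vertices complete to $U$, and has all other vertices of $U'$ of degree exactly $t-1$ --- that is, $G\in\F^n_{t-1,t}$ --- and $\delta(G)=t-2$, as required. The main obstacle is the ``$S$ complete to $U$'' step: tightness in Proposition~\ref{mindeg} pins down the neighbourhoods of $V$ and the $V$-degrees of $U'\setminus S$ but says essentially nothing about $U\setminus V$ beyond $u_0$, and in fact the edge count alone is consistent with $V\subsetneq U\setminus\{u_0\}$, so this step must genuinely invoke saturation --- which is why it is used twice above, with a small case split on $|S\cup N(u)|$.
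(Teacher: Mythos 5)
Your proof is correct and follows the same overall strategy as the paper: read off the equality conditions from Proposition~\ref{mindeg}, then use saturation to force $N(u_0)$ to be complete to $U$ by adding an edge at a low-degree vertex $u$, tracing the created $K_{(t,t-1)}$, noting its $U$-side lies in $V$ and hence is complete to $N(u_0)$, and assembling a forbidden $K_{(t-1,t)}$. The one real difference is the choice of added edge: the paper picks $u'\in U'\setminus N(u_0)$ non-adjacent to $u$, so that the $(t-1)$-side $N(u)\cup\{u'\}$ of the created copy automatically contains two vertices outside $N(u_0)$ (namely $u'$ and some neighbor of $u$ outside $N(u_0)$), giving $|N(u_0)\cup N(u)\cup\{u'\}|\ge t$ in a single step; you instead first add $us_0$ with $s_0\in N(u_0)$, which only guarantees $|S\cup N(u)|\ge t-1$ and forces your case split, with the borderline case requiring a second edge addition $uu'$ with $u'\notin S\cup\{x\}$ that in effect reproduces the paper's choice. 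Both are valid; the paper's choice simply avoids the split. Your explicit closing count from the $U'$ side, verifying every vertex of $U'\setminus S$ has degree exactly $t-1$, makes precise a step the paper leaves implicit, which is a nice touch.
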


\begin{proof}

    Note that any graph in $\F_{t-1,t}^n$ contains exactly $(2t-3)n-(t-1)(t-2)$ many edges. Thus, by Proposition \ref{mindeg}, it remains to show that any $K_{t-1, t}$-saturated graph with that many edges must be one of the graphs in $\F_{t-1,t}^n$.
    
    Suppose for the sake of contradiction that $G$ is a $K_{t-1,t}$-saturated graph with exactly $(2t-3)n - (t-1)(t-2)$ edges but $G$ is not one of the graphs in $\F_{t-1,t}^n$. Note that the minimum degree $\d = t-2$ and $|N(u_0)|=t-2$. By construction, every vertex in $V$ is adjacent to all vertices in $N(u_0)$. Due to the equality in Step \eqref{match}, we know that every vertex in $U' \setminus N(u_0)$ has exactly $t-1$ neighbors in $V$, and every vertex in $U \setminus V$ has degree exactly $t-2$. If all vertices in $U$ are adjacent to the $t-2$ vertices in $N(u_0)$, then $G$ is indeed a graph in $\F_{t-1,t}^n$, a contradiction.

    Thus, we may assume there exists a vertex $u \in U$ which is not adjacent to all the vertices in $N(u_0)$. Thus, $u \not \in V$ and has degree exactly $t-2$. Let $u' \in U' \setminus N(u_0)$ be a vertex that is also not adjacent to $u$ and consider adding the edge $uu'$ to $G$. It must create a copy of $K_{(t,t-1)}$ because $u$ has only $t-2$ neighbors in $U'$. Let $S$ be the rest of the $t-1$ vertices on the same side as $u$ in the created copy of $K_{(t,t-1)}$ other than $u$. Note that every vertex in $S$ has at least $t-1$ neighbors. Since vertices outside of $V$ have degree $t-2$, we have $S\subseteq V$. Since $u$ has less than $t-2$ neighbors in $N(u_0)$, there exists at least one vertex $v'$ other than $u'$ in $U'\setminus N(u_0)$ that is also in the created copy of $K_{(t,t-1)}$. Then, the vertices in $S$ along with the vertices in $\{u', v'\} \cup N(u_0)$ form a copy of $K_{(t-1,t)}$ in $G$, a contradiction.   
\end{proof}

From now on, we may assume that $G$ has minimum degree at least $t-1$. 

\section{Finding a nice core}
\label{sec:asymp}

Let $G$ be a $K_{s,t}$-saturated $n$ by $n$ bipartite graph with at most $(s+t-2)n - (s-1)(t-1)$ many edges and minimum degree at least $t-1$. Our goal in this section is to show that $G$ contains a copy of the nice core, i.e., there exist $A\subseteq U$ and $A'\subseteq U'$ such that $|A|=|A'|=t-1$, there exist $a_0\in A$ and $a_0'\in A'$ such that $N(a_0)=A'$ and $N(a_0')=A$, and the graph induced by $(A, A')$ contains a copy of $K_{s,t-1}$. More precisely, we will prove the following lemma:

\begin{lemma}
\label{findcore}
If $G$ does not contain a nice core, then $G$ is one of the graphs in $\F_{s,t}^n$. 
\end{lemma}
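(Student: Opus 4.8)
The plan is to show the contrapositive-flavored statement directly: assume $G$ is a $K_{s,t}$-saturated $n$ by $n$ bipartite graph with at most $(s+t-2)n-(s-1)(t-1)$ edges, minimum degree at least $t-1$, and with no nice core, and then argue $G$ must be an extremal member of $\F_{s,t}^n$ (which in particular forces the edge count to equal $(s+t-2)n-(t-1)(t-2)$, so in the end the hypothesized upper bound is only attained by these graphs). The first step is to find \emph{some} core. Following the averaging idea from \cite{GKS}: since $e(G)\le (s+t-2)n-(s-1)(t-1)$, the average degree is at most $s+t-2$, so a positive fraction of vertices on each side have degree at most $s+t-2$ (say, at most some constant $C=C(s,t)$). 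I would pick a vertex $a_0\in U$ of small degree, look at $N(a_0)\subseteq U'$, and then among the second-neighborhood try to locate a low-degree vertex $a_0'$ whose neighborhood $N(a_0')$ is ``mostly'' inside a bounded set; iterating this bounded-degree expansion a constant number of times and using that $n$ is large, one extracts a core $A\cup A'$ of bounded size with the required $a_0,a_0'$. The key quantitative input is that bounded-degree neighborhoods cannot expand forever inside a graph this sparse without creating the cheap ``two large sets'' configuration described after Proposition \ref{mindeg}, which would already give too many edges.

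The second and main step is to upgrade an arbitrary small core to a \emph{nice} core, or else to pin down the structure. Here is where I expect the real work. Given a core $A\cup A'$ with witnesses $a_0, a_0'$, consider any vertex $u'\in U'\setminus A'$ that is non-adjacent to $a_0$ (such $u'$ exist since $|A'|=\deg(a_0)$ is bounded and $n$ is large). Adding $a_0u'$ must create a $K_{s,t}$; since $\deg(a_0)\ge t-1$ it could in principle be a $K_{(s,t)}$ using $a_0$ on the $s$-side, which would need $s-1$ further vertices of $U$ all joined to $t-1$ vertices of $N(a_0)\cup\{u'\}$. I would analyze, over all such $u'$, the copies of $K_{s,t}$ produced: either many of them reuse a common structure lying on $N(a_0)$, in which case $N(a_0)$ together with its saturating partners forms a dense set containing a $K_{s,t-1}$ — after trimming to exactly $t-1$ vertices on each side and checking the neighborhood conditions for the witnesses, this is precisely a nice core, contradiction; or the copies are ``spread out,'' which forces many vertices of $U$ to have $s-1$ neighbors concentrated on the bounded set $N(a_0)$ and simultaneously forces $U'\setminus N(a_0)$ to have degree at least $t-1$ outside a bounded set — this is exactly the two-large-sets situation, and the counting of Proposition \ref{mindeg} (with $N(a_0)$ playing the role of $N(u_0)$ and the saturating partners playing the role of $V$) yields $e(G)\ge (s+t-2)n-(t-1)(t-2)$, with equality analysis — carried out as in the second Proposition above, by examining the slack in the analogue of inequality \eqref{match} — showing $G\in\F_{s,t}^n$.

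The third step is to clean up the two boundary regimes of the dichotomy so they glue: I need the ``dense'' alternative to genuinely deliver $|A|=|A'|=t-1$ with $N(a_0)=A'$, $N(a_0')=A$ and an embedded $K_{s,t-1}$, not merely a large dense bipartite piece. The way to force exactly $t-1$ on each side is to observe that if the dense piece had $t$ or more vertices on a side together with $s$ on the other all mutually joined, that is a $K_{s,t}$ in $G$ — forbidden; and the witness $a_0$ can be taken to be a minimum-degree-in-the-core vertex, whose neighborhood, after possibly enlarging the core slightly and re-trimming, equals the $(t-1)$-set $A'$. I would also handle the symmetric roles of $U$ and $U'$ uniformly, since $K_{s,t}$-saturation allows created copies to be $K_{(s,t)}$ or $K_{(t,s)}$, by splitting on which orientation the added edge $a_0u'$ creates and noting the $\deg\ge t-1$ hypothesis rules out the ``wrong'' orientation when $a_0$ has degree exactly $t-1$, which is the relevant case after the core extraction.

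The step I expect to be the main obstacle is the second one — specifically, making the dichotomy clean: showing that \emph{either} the saturating $K_{s,t}$'s concentrate enough to build a nice core \emph{or} they spread enough to trigger the Proposition \ref{mindeg}-style count, with no leakage in between. The danger is an intermediate regime where the saturating copies are moderately spread, giving a count strictly below $(s+t-2)n-(s-1)(t-1)$ yet no nice core; ruling this out will require carefully choosing $a_0$ to have the \emph{smallest possible} degree (hence degree exactly $t-1$ by the minimum-degree hypothesis, which is the best case), so that every added edge $a_0u'$ creates a $K_{(s,t)}$ whose $t$-side is forced to be contained in $N(a_0)\cup\{u'\}$, a set of size exactly $t$, leaving essentially no freedom and collapsing the intermediate regime.
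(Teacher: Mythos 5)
Your proposal takes a genuinely different route from the paper — you try to first locate a generic bounded core by averaging and then upgrade it to a nice core, whereas the paper never extracts a generic core at all: it partitions each side into high-degree sets $V_0,V_0'$ (degree $\ge n^{1/4}$) and low-degree complements, and then runs a chain of four refinement claims driven by a distance-3 argument (Observations~\ref{threepath} and~\ref{threepathlarge}) that, at each stage, either produces a nice core outright or narrows the structure until $G$ is forced into $\F_{s,t}^n$. That machinery is precisely what keeps the "intermediate regime" you worry about from arising, and it is absent from your sketch.

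There is also a concrete error in the step you flag as the crux. You claim that by choosing $a_0$ of minimum degree $t-1$, "the $\deg\ge t-1$ hypothesis rules out the `wrong' orientation," i.e., that adding $a_0u'$ must create a $K_{(s,t)}$ whose $t$-side is $N(a_0)\cup\{u'\}$. That is false when $s<t$: after adding $a_0u'$ the vertex $a_0$ has degree $t$, which is at least $s$, so the created copy may perfectly well be a $K_{(t,s)}$ with $a_0$ on the $t$-side. In that case the other $t-1$ vertices of $U$ in the copy are \emph{not} constrained to lie near $N(a_0)$, and your "essentially no freedom" collapse does not happen. This is exactly the spread-out situation that the paper must contend with (compare the $K_{(t,s)}$ alternative analyzed throughout Claims~\ref{c1}--\ref{c4}), and it is where the distance-3 observations do real work: they force the created copy to use only one vertex from a controlled low-degree side, which is what pins down the $s-1$ common neighbors in $V_0$ and eventually the set $\tilde V$. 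Without a replacement for that mechanism your dichotomy does leak, as you yourself suspected, so the proposal as written has a genuine gap.
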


The general strategy is to divide the vertex classes of $G$ into sets that either have large degree or have large size (this idea of dividing the vertex set has been used previously for different saturation problems in \cite{BFP} and \cite{CL}). Then, we slowly refine the sets to obtain more structural properties to show that the graph $G$ contains either too many edges or a nice core.

Let $V_0 := \{v \in U: d(v) \ge n^{\frac{1}{4}}\}$ and similarly define $V_0' := \{v' \in U': d(v') \ge n^{\frac{1}{4}}\}$. Due to the upper-bound on the number of edges of $G$, it follows that the sizes of $V_0$ and $V_0'$ are at most $O\left(n^{\frac{3}{4}}\right)$. We will slowly refine these sets through this section (see Figure \ref{fig:asymp}). 

\begin{figure}
    \centering
    \includegraphics[scale=0.7]{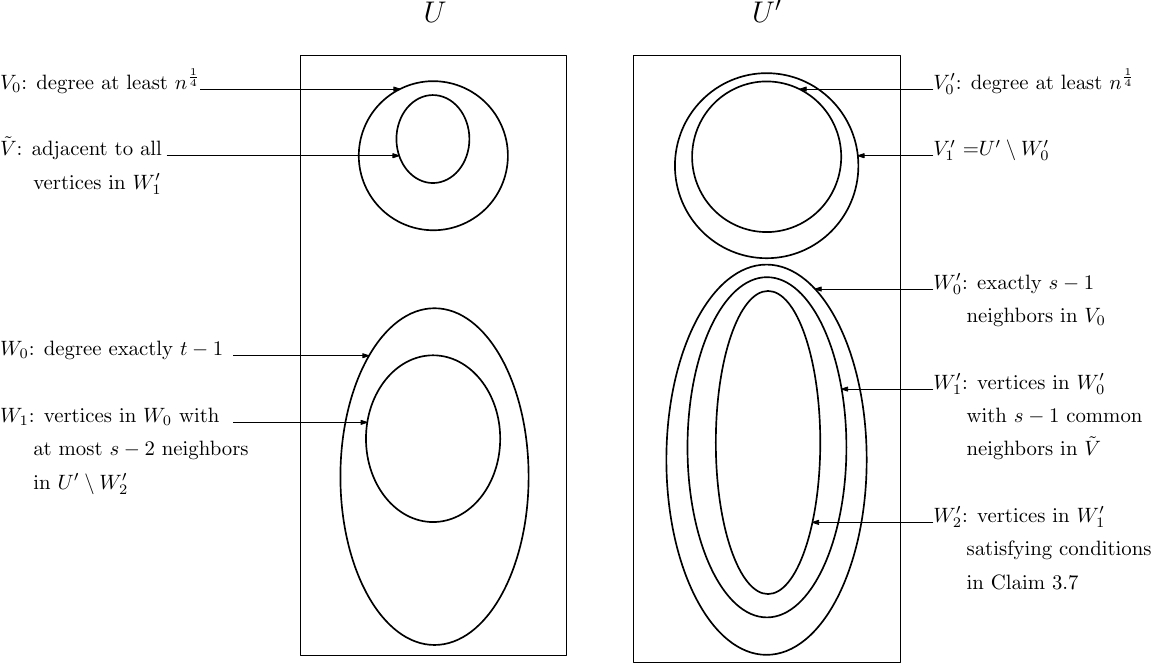}
    \caption{Structure obtained in Section \ref{sec:asymp}}
    \label{fig:asymp}
\end{figure}

First, we provide three small technical observations that we will apply repeatedly in the future. 

\begin{observation}
\label{bigsmall}
If there exist sets $R_{big}$ and $R_{small}$ on opposite sides of the bipartition in $G$ such that $|R_{big}|=n-o(n)$ and $|R_{small}|=o(n)$, and every vertex $u\in R_{big}$ has at least $s-1$ neighbors in $R_{small}$, then there exist $n-o(n)$ vertices in $R_{big}$ with exactly $s-1$ neighbors in $R_{small}$, and there are $n-o(n)$ many vertices outside but on the same side as $R_{small}$ with degree exactly $t-1$. 
\end{observation}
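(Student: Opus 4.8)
The plan is a short double-counting argument that exploits the global edge bound $e(G)\le (s+t-2)n-(s-1)(t-1)$ assumed throughout this section. First I would fix notation by assuming without loss of generality that $R_{big}\subseteq U$ and $R_{small}\subseteq U'$; then $U'\setminus R_{small}$ has size $n-o(n)$ and is precisely the set of vertices ``outside but on the same side as $R_{small}$'' referred to in the statement.

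The first key step is to isolate two edge-disjoint families of edges of $G$: the edges between $R_{big}$ and $R_{small}$, and the edges meeting $U'\setminus R_{small}$. These are disjoint because their $U'$-endpoints lie in the disjoint sets $R_{small}$ and $U'\setminus R_{small}$. By hypothesis every vertex of $R_{big}$ sends at least $s-1$ edges into $R_{small}$, so the first family has size at least $(s-1)|R_{big}|=(s-1)n-o(n)$; since $G$ has minimum degree at least $t-1$, the second family has size at least $(t-1)|U'\setminus R_{small}|=(t-1)n-o(n)$. Adding these two estimates and comparing with the upper bound already forces $e(G)=(s+t-2)n-o(n)$.

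The second step upgrades ``at least'' to ``exactly''. Let $x$ be the number of vertices of $R_{big}$ with at least $s$ neighbours in $R_{small}$, and let $y$ be the number of vertices of $U'\setminus R_{small}$ of degree at least $t$. Since no vertex of $R_{big}$ has fewer than $s-1$ neighbours in $R_{small}$ and no vertex of $G$ has degree less than $t-1$, the two lower bounds above improve to $(s-1)|R_{big}|+x$ and $(t-1)|U'\setminus R_{small}|+y$ respectively, giving
\[
e(G)\ \ge\ (s+t-2)n-o(n)+x+y.
\]
Combined with $e(G)\le (s+t-2)n-(s-1)(t-1)$ this yields $x+y=o(n)$, hence $x=o(n)$ and $y=o(n)$. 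Therefore $|R_{big}|-x=n-o(n)$ vertices of $R_{big}$ have exactly $s-1$ neighbours in $R_{small}$, and $|U'\setminus R_{small}|-y=n-o(n)$ vertices of $U'\setminus R_{small}$ have degree exactly $t-1$, which is the desired conclusion.

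I do not expect a serious obstacle here; the only points requiring care are the bookkeeping of the $o(n)$ error terms (which arise solely from $|R_{small}|=o(n)$ and $n-|R_{big}|=o(n)$, scaled by the fixed constants $s-1$ and $t-1$) and the verification that the two edge families are genuinely disjoint, so that their sizes may be added inside $e(G)$.
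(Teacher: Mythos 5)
Your proof is correct and follows essentially the same double-counting argument as the paper's: split the edges into those into $R_{small}$ and those into $U'\setminus R_{small}$, lower-bound each by $(s-1)|R_{big}|$ and $(t-1)|U'\setminus R_{small}|$ respectively, and compare against the standing upper bound $e(G)\le(s+t-2)n-(s-1)(t-1)$. Your explicit introduction of $x$ and $y$ simply makes precise the bookkeeping the paper leaves implicit.
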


\begin{proof}
    Without loss of generality, assume that $R_{big}\subseteq U$ and $R_{small}\subseteq U'$. Between $R_{big}$ and $R_{small}$, there are at least $(s-1)(n-o(n))$ many edges. Since $G$ has minimum degree at least $t-1$, there are at least $(t-1)(n-o(n))$ edges between $U$ and $U'\setminus R_{small}$. Since $G$ has at most $n(s+t-2)-o(n)$ many edges, there are $n-o(n)$ vertices in $R_{big}$ with exactly $s-1$ neighbors in $R_{small}$ and there are $n-o(n)$ many vertices in $U'\setminus R_{small}$ with degree exactly $t-1$, proving our observation. 
\end{proof}

The following two observations are often used in conjunction. 

\begin{observation}
\label{threepath}
Let $R\subseteq U$ and $R'\subseteq U'$. Let $u\in R$ and $u'\in R'$ such that $u$ and $u'$ are at distance more than three in the graph induced by $(R, R')$ (i.e., there is no path using at most three edges between $u$ and $u'$ in the graph induced by $(R, R')$). Then, for one of the sets $R$ or $R'$, the copy of $K_{s,t}$ created by adding the edge $uu'$ uses only one vertex from that set (namely, either $u$ or $u'$). 
\end{observation}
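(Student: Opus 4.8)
The plan is to analyze the copy of $K_{s,t}$ that is forced when we add the edge $uu'$, and exploit the distance hypothesis to show it cannot straddle both $R$ and $R'$ with more than one vertex on each side. Since adding $uu'$ creates a copy $H$ of $K_{s,t}$ (in the unordered sense, so either a $K_{(s,t)}$ or a $K_{(t,s)}$), and $G$ itself is $K_{s,t}$-free, the new edge $uu'$ must lie in $H$. Thus $u$ and $u'$ are in opposite classes of $H$, say $u$ is one of the vertices on the $U$-side of $H$ and $u'$ is one of the vertices on the $U'$-side.

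First I would suppose, for contradiction, that $H$ uses at least two vertices from $R$ and at least two vertices from $R'$. In particular, besides $u$ there is another vertex $x \in R \cap V(H)$ on the $U$-side, and besides $u'$ there is another vertex $y' \in R' \cap V(H)$ on the $U'$-side. Now, in $H$ every vertex on one side is adjacent to every vertex on the other side; these adjacencies (except possibly the single new edge $uu'$ itself) are genuine edges of $G$, and since $x,y' \in R \cup R'$ they are in fact edges of the graph induced by $(R,R')$. I would then trace out a short path from $u$ to $u'$ in this induced subgraph: the edge $x y'$ is in $H$ hence in the induced subgraph (as $xy' \neq uu'$ because $x \neq u$), the edge $u y'$ is in $H$ and lies in the induced subgraph (it equals $uu'$ only if $y' = u'$, but then I instead use $xu'$, which is an edge of $H$ in the induced subgraph since $x \neq u$), and similarly $x u'$ is available. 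Combining, $u - y' - x - u'$ (or the symmetric $u - x' - y - u'$, or a shorter $2$-edge path $u - y' - u'$ when one can route through a single auxiliary vertex) gives a path of at most $3$ edges from $u$ to $u'$ inside the graph induced by $(R,R')$, contradicting the distance hypothesis.

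I would organize the casework by whether $H$ restricted to $R \cup R'$ is a $K_{(s,t)}$ or $K_{(t,s)}$, and by which of the "extra" vertices $x, y'$ coincide with endpoints of the new edge, but in every case the conclusion is that two vertices on each side of $H$ inside $R \cup R'$ force a $\le 3$-edge $u$–$u'$ path in the induced bipartite graph. Hence $H$ has at most one vertex in $R$ or at most one vertex in $R'$; since $u \in R \cap V(H)$ and $u' \in R' \cap V(H)$, the "at most one" vertex on the relevant side must be exactly $u$ or exactly $u'$, which is the claimed statement. The only delicate point — and the part I would write most carefully — is handling the degenerate situations where the second vertex $x$ or $y'$ one wants to use is forced to coincide with $u'$ or $u$ respectively; there one has to pick the path's intermediate vertices from the guaranteed surplus of vertices in $H$ (of which there are at least $s + t \ge 3$ total, more than enough once $s \ge 1$, $t \ge 2$), ensuring the chosen edges are all old edges of $G$ lying in $R \cup R'$ and never reuse the single new edge $uu'$.
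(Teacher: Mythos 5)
Your argument is correct and is essentially the paper's own proof: assuming the created copy of $K_{s,t}$ contains a second vertex $x\in R$ and a second vertex $x'\in R'$, the complete bipartite structure of the copy yields the $3$-edge path $u\,x'\,x\,u'$ in the graph induced by $(R,R')$ (none of whose edges is the new edge $uu'$, since $x\neq u$ and $x'\neq u'$), contradicting the distance hypothesis. The degenerate coincidences you propose to handle carefully (such as $x=u'$) cannot occur because $R\subseteq U$ and $R'\subseteq U'$ lie on opposite sides of the bipartition, so no extra casework is needed; also note that a $2$-edge $u$--$u'$ path is impossible in a bipartite graph since $u$ and $u'$ are in different classes.
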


\begin{proof}
    Suppose for the sake of contradiction that both $R$ and $R'$ contain another vertex $x$ and $x'$ respectively that are also in the copy of $K_{s,t}$ created by adding the edge $uu'$. Then, $ux'xu'$ is a path in $(R,R')$, contradicting the fact that $u'$ is at distance more than three away from $u$.  
\end{proof}

\begin{observation}
\label{threepathlarge}
    Let $R\subseteq U\setminus V_0$ and $R'\subseteq U'\setminus V_0'$, where $|R'|=n-o(n)$. Then, for every vertex $u\in R$, there exists $R_0'\subseteq R'$ with $|R_0'|=n-o(n)$ such that every vertex in $R_0'$ is at distance more than three away from $u$ in the graph induced by $(R, R')$. 
\end{observation}

\begin{proof}
    Since all the vertices outside of $V_0$ and $V_0'$ have degree at most $n^{\frac{1}{4}}$, it follows that there are at most $3n^{\frac{3}{4}}$ vertices in $R'$ at distance three or less from $u \in R$. Thus, $|R_0'|$ is $n-o(n)$.
\end{proof}

We will now prove Lemma \ref{findcore} through a series of claims. In our claims, we will often obtain large sets $R$ and $R'$ with certain structural properties. Then, we will pick a vertex $u\in R$ and apply Observations \ref{threepath} and \ref{threepathlarge} to obtain a refined large set $R_0'\subseteq R'$ with even more structural properties. 

Now, let us begin the series of claims to find the nice core. 

\begin{claim} \label{c1}
One of the following is true: either
\begin{enumerate}
    \item there exist $n - o(n)$ vertices of $U' \setminus V_0'$ each having exactly $s-1$ neighbors in $V_0$ and $n - o(n)$ vertices of $U \setminus V_0$ each having degree exactly $t-1$, or
    \item the mirrored opposite where there exist $n - o(n)$ vertices of $U \setminus V_0$ each having exactly $s-1$ neighbors in $V_0'$ and $n - o(n)$ vertices of $U' \setminus V_0'$ have degree exactly $t-1$.
\end{enumerate}
\end{claim}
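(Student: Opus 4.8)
The plan is to use a vertex of minimum degree (which is now at least $t-1$) in a way analogous to the proof of Proposition \ref{mindeg}, but carefully tracking where the created copies of $K_{s,t}$ live so as to produce the two large sets demanded by the claim. First I would pick a vertex $u_0$ of minimum degree $\delta$; by the edge bound $\delta$ is $o(n)$ (in fact bounded, but we only need $o(n)$ here), so $N(u_0)$ is a small set on one side, say $N(u_0) \subseteq U'$ with $u_0 \in U$. Since $G$ is $K_{s,t}$-saturated, for every non-neighbor $u' \in U' \setminus N(u_0)$ adding $u_0 u'$ creates a copy of $K_{s,t}$; this copy uses $u_0$ on the $U$-side together with either $s-1$ or $t-1$ further vertices of $U$, all of whose common neighborhood with $u_0$ (minus possibly $u'$) lies inside $N(u_0)$. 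The key dichotomy is whether, for most such $u'$, the created copy is a $K_{(s,t)}$ (so $u_0$ is on the size-$s$ side and $u'$ is one of the $t$ vertices, giving $u'$ at least $t-1$ neighbors among a set of $s-1$ vertices sitting inside $N(u_0) \cup \{?\}$) or a $K_{(t,s)}$ (so $u_0$ is on the size-$t$ side, $u'$ is one of the $s$ vertices, and there are $t-1$ vertices of $U$ all adjacent to $u'$ and each sharing $s-1$ neighbors with $u_0$ inside $N(u_0)$).

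In the first sub-case I would set $R_{small} := N(u_0) \subseteq U'$ and observe that $n - o(n)$ vertices $u'$ of $U' \setminus N(u_0)$ have at least $s-1$ neighbors inside a fixed $(s-1)$-subset of $N(u_0)$ — actually it is cleaner to phrase it as: $n-o(n)$ vertices of $U'\setminus N(u_0)$ have at least $s-1$ neighbors in the small set $N(u_0) \subseteq V_0$ is false in general since $N(u_0)$ need not lie in $V_0$; so instead I would let $R_{small}$ be the union of $N(u_0)$ with the (at most $s-1$ per copy, hence $o(n)$ total after a counting argument, or just absorb into $V_0$) extra vertices, and then invoke Observation \ref{bigsmall} with $R_{big} := U \setminus R_{small}$-side appropriately. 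Concretely: the set $V$ of all vertices of $U$ appearing in some created copy has every vertex adjacent to $\ge s-1$ vertices of $N(u_0)$, and $n-o(n)$ vertices of $U' \setminus N(u_0)$ are adjacent to $\ge t-1$ vertices of $V$; but to land inside $V_0$ as the claim wants, I would note that only $O(n^{3/4})$ vertices of $V$ can have degree $\ge n^{1/4}$, and the low-degree vertices of $V$ contribute few edges, forcing $n-o(n)$ vertices of $U'\setminus N(u_0)$ to have $\ge t-1$ neighbors in $V \cap V_0 \subseteq V_0$. Feeding $R_{big} := U' \setminus N(u_0)$ (with $s$ and $t$ roles swapped) and $R_{small} := V \cap V_0$ into Observation \ref{bigsmall} then yields exactly conclusion (1): $n-o(n)$ vertices of $U'\setminus V_0'$ with exactly $s-1$ neighbors in $V_0$, and $n-o(n)$ vertices of $U\setminus V_0$ of degree exactly $t-1$. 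The second sub-case is the mirror image and yields conclusion (2).

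The main obstacle I anticipate is the bookkeeping that forces the relevant small set to actually sit inside $V_0$ (resp. $V_0'$) rather than being an arbitrary $o(n)$-set: the created copies of $K_{s,t}$ can a priori use low-degree vertices, and one must argue via the global edge count that the "useful" endpoints can be taken to be high-degree, discarding an $o(n)$ exceptional set at each stage. A secondary subtlety is handling the case $s=1$ (where $s-1=0$ and the condition "$s-1$ neighbors in $V_0$" is vacuous) and making sure "exactly $s-1$" — not merely "$\ge s-1$" — follows, which is precisely what the edge-budget equality in Observation \ref{bigsmall} delivers. Once these are in place, the dichotomy on the type of created copy ($K_{(s,t)}$ versus $K_{(t,s)}$, decided by which side $u_0$ falls on, together with whether $\delta < t-1$ or $\delta \ge t-1$ — here $\delta \ge t-1$ so both types are possible) gives the two alternatives of the claim directly.
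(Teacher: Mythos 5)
Your proposal correctly identifies the main obstacle --- the claim demands that the small side of the bipartite argument be the fixed high-degree set $V_0$ (resp.\ $V_0'$), not an arbitrary $o(n)$-sized set --- but it does not actually overcome it. Your suggested patch (``absorb into $V_0$'', ``the low-degree vertices of $V$ contribute few edges'') is not justified: the set $V$ of $U$-vertices appearing in created copies can have $\Omega(n)$ vertices outside $V_0$, each with degree up to $n^{1/4}$, contributing as many as $\Theta(n^{5/4})$ edges to $U'\setminus N(u_0)$. That is more than enough edges for many vertices of $U'\setminus N(u_0)$ to have all $t-1$ of their required $V$-neighbors in $V\setminus V_0$, so nothing forces $n-o(n)$ of them to hit $V\cap V_0$ with multiplicity $s-1$. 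The edge budget $(s+t-2)n-O(1)$ does not rule this out on its own.

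The paper resolves exactly this issue by choosing a different starting dichotomy and invoking Observations \ref{threepath} and \ref{threepathlarge}, which your proposal does not use. Rather than dichotomizing on the orientation of the created copy ($K_{(s,t)}$ vs.\ $K_{(t,s)}$), the paper splits on whether every vertex of $U\setminus V_0$ has at least $s-1$ neighbors in $V_0'$. In the affirmative case, Observation \ref{bigsmall} applied with $R_{\mathrm{small}}=V_0'$ gives alternative (2) immediately. In the negative case, one fixes a specific witness $u\in U\setminus V_0$ with at most $s-2$ neighbors in $V_0'$ and considers only those $w'\in U'\setminus V_0'$ at distance more than $3$ from $u$ in $(U\setminus V_0,\,U'\setminus V_0')$; Observation \ref{threepathlarge} guarantees $n-o(n)$ such $w'$. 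Since $u$ has at most $s-2$ neighbors in $V_0'$, the created copy after adding $uw'$ must use a second vertex of $U'\setminus V_0'$, so Observation \ref{threepath} forces it to use $u$ as the \emph{only} vertex of $U\setminus V_0$. That is what places the remaining $\geq s-1$ $U$-vertices of the copy into $V_0$, landing in alternative (1) after one more application of Observation \ref{bigsmall}. This ``distance-$3$'' structural step is the missing idea in your write-up; without it, or some substitute forcing the copy away from $U\setminus V_0$, the argument does not close.

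Two smaller points. First, your choice of $u_0$ as a global minimum-degree vertex is weaker than what you need: the paper's $u$ is chosen to have few ($\leq s-2$) neighbors specifically in $V_0'$, and the existence of such a $u$ is exactly the negative branch of the dichotomy --- that targeted choice is what makes the copy forced. Second, ``exactly $s-1$'' and ``degree exactly $t-1$'' do indeed come from the equality side of Observation \ref{bigsmall}, as you note; that part of your proposal is fine.
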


\begin{proof}
    First, suppose that every vertex in $U \setminus V_0$ has at least $s-1$ neighbors in $V_0'$. Then, setting $R_{big}=U \setminus V_0$ and $R_{small}=V_0'$, it follows from Observation \ref{bigsmall} that the second situation occurs.

    Now, suppose that there exists a vertex $u\in U \setminus V_0$ with less than $s-1$ neighbors in $V_0'$. Let $W'\subseteq U' \setminus V_0'$ contain all the vertices $w'$ such that $w'$ is at distance more than three from $u$ in the graph induced by $\left(U \setminus V_0, U' \setminus V_0'\right)$. It follows from Observation \ref{threepathlarge} that $|W'|=n-o(n)$. Let $w' \in W'$ and consider a copy of $K_{s,t}$ created by adding the edge $uw'$. Since $u$ has at most $s-2$ neighbors in $V_0'$, the copy of $K_{s,t}$ uses at least one more vertex other than $w'$ in $U' \setminus V_0'$. Taking $R$ and $R'$ as $U \setminus V_0$ and $U' \setminus V_0'$ respectively, it follows from Observation \ref{threepath} that this $K_{s,t}$ only uses $u$ in $U \setminus V_0$ and takes at least $s-1$ other vertices from $V_0$. Then, $w'$ has at least $s-1$ neighbors in $V_0$. Thus setting $R_{big}=W'$ and $R_{small}=V_0$, applying Observation \ref{bigsmall} ensures the occurrence of the first situation. 
\end{proof}

From now on, without loss of generality, we assume that $G$ satisfies property 1 in Claim \ref{c1}. Let $W_0$ be the large set of vertices in $U\setminus V_0$ with degree exactly $t-1$. Let $W_0'\subseteq U'\setminus V_0'$ be the large set of vertices with exactly $s-1$ neighbors in $V_0$.  For convenience, define $V_1'=U'\setminus W_0'$. 

\begin{claim} \label{c2}
If $G$ does not contain a nice core, then there exists a set $W_1'\subseteq W_0'$ containing $n-o(n)$ vertices such that they have $s-1$ common neighbors in $V_0$.  
\end{claim}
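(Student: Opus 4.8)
The plan is to exploit the fact that every vertex in $W_0'$ already has exactly $s-1$ neighbors inside the small set $V_0$, so the only obstacle to a common choice is that different vertices of $W_0'$ might use different $(s-1)$-subsets of $V_0$. Since $|V_0| = O(n^{3/4})$, the number of $(s-1)$-subsets of $V_0$ is $O(n^{3(s-1)/4})$, which is $o(n)$, so a pigeonhole/averaging argument over these subsets will not by itself yield $n - o(n)$ vertices on one subset — the count is too small relative to $n$ only if $s=1$. So pigeonhole alone is not enough, and the real content must come from the saturation condition plus the absence of a nice core.

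First I would set up the machinery: pick an arbitrary vertex $u \in W_0$ (degree exactly $t-1$, all neighbors in $V_0'$), and apply Observation \ref{threepathlarge} with $R = W_0$, $R' = W_0'$ to find $W' \subseteq W_0'$ with $|W'| = n - o(n)$ consisting of vertices at distance more than $3$ from $u$ in the graph induced by $(W_0, W_0')$. For each $w' \in W'$ with $uw' \notin E(G)$ (all but $o(n)$ of them, since $d(u) = t-1$), adding $uw'$ creates a $K_{s,t}$; by Observation \ref{threepath} applied to $R = W_0$, $R' = W_0'$ — wait, I need to be careful about which side the copy lands on. Since $u$ is far from $w'$, one side of the created $K_{s,t}$ uses only one of $u, w'$. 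If it uses only $u$ from the $U$-side, then $w'$ together with $t-1$ more $U$-vertices and $s-1$ more $U'$-vertices forms the copy; the $t-1$ $U$-vertices are neighbors of $w'$ and of the $s-1$ $U'$-vertices. If it uses only $w'$ from the $U'$-side, then $u$ plus $s-1$ more $U$-vertices, and $t-1$ $U'$-vertices all adjacent to $u$ — but $d(u) = t-1$ so these $t-1$ vertices are exactly $N(u)$, all in $V_0'$. I would argue that the latter case is the one that forces structure: in it, the $s$ vertices $\{u\} \cup X$ (with $X \subseteq U$, $|X| = s-1$) are all joined to all of $N(u) \subseteq V_0'$, and $w' \in N(u)$... no, $w' \notin V_0'$ since $w' \in W_0'$, so actually $w'$ must be one of the $t-1$ second-class vertices together with $N(u)$, contradiction unless the copy uses $w'$ on the $U$-side, which is impossible. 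Let me reconsider: the correct dichotomy is that the copy uses only $u$ from $U$, hence at least $s-1$ vertices of the copy's $U'$-part lie outside $W_0'$, i.e., in $V_1' = U' \setminus W_0'$, and since $|V_1'| = o(n)$ and these $s-1$ vertices are common neighbors of the $t-1$ $U$-vertices including (a vertex near) $w'$. Actually the cleanest route: the copy uses only $u$ from the $U$-side, so its $U'$-part is $\{w'\} \cup Y'$ with $|Y'| = s-1$, $Y' \subseteq N(u) \subseteq V_0'$, and its $U$-part is $t$ vertices each adjacent to everything in $\{w'\} \cup Y'$; in particular $w'$ has $t-1$ neighbors adjacent to the same $(s-1)$-set $Y' \subseteq V_0'$.

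So the main step is: the saturation process pins down, for $n - o(n)$ vertices $w' \in W'$, a witnessing $(s-1)$-subset $Y'_{w'} \subseteq V_0'$, together with $t-1$ $U$-vertices that are common neighbors of $\{w'\} \cup Y'_{w'}$. Now I would do a second pigeonhole: there are only $o(n)$ possible subsets $Y'$ of $V_0'$, but that is still too many; instead, I would iterate the ``far vertex'' trick — within $W'$, pick a far vertex and rerun, or use Observation \ref{bigsmall}-style counting to show that the common-neighbor structure forces $n - o(n)$ of the $w'$ to share a single $(s-1)$-set. The key leverage is the assumption that $G$ has no nice core: if two far-apart vertices of $W'$ used disjoint or merely different witnessing sets in $V_0$ (note the claim's conclusion is $s-1$ common neighbors \emph{in $V_0$}, not $V_0'$, so there is a symmetric version of the above on the $U$-side), we would be able to assemble $A, A'$ of size $t-1$ with the required $a_0, a_0'$ and an embedded $K_{s,t-1}$, contradicting no-nice-core. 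The hard part will be exactly this last reduction: showing that ``many different witnessing $(s-1)$-sets'' is incompatible with ``no nice core,'' which presumably requires carefully building the two degree-$A'$ and degree-$A$ vertices $a_0, a_0'$ and verifying the $K_{s,t-1}$ sits inside — I expect this to consume most of the real work, with the set-up and the two applications of Observations \ref{threepath}–\ref{threepathlarge} being routine.
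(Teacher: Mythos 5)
Your set-up (fix a vertex of $W_0$, pass to the $n-o(n)$ vertices of $W_0'$ at distance more than $3$, and invoke Observations \ref{threepath} and \ref{threepathlarge} on the created copy) is the right machinery and matches the paper, but the actual mechanism that produces a \emph{common} $(s-1)$-set is missing, and the part you defer as ``the hard part'' is precisely where the content lies. The paper's proof turns on a case split you never make: whether some $w\in W_0$ has at most $s-2$ neighbors in $V_1'=U'\setminus W_0'$. In that case one does \emph{not} compare witnessing sets of different vertices at all; instead one fixes a single neighbor $x'\in W_0'$ of $w$, notes that $d(w)=t-1$ forces any $K_{(s,t)}$ created by adding $ww'$ to use all of $N(w)$ and hence $x'$, and then Observation \ref{threepath} forces the copy to use all $s-1$ neighbors of $x'$ in $V_0$ --- so every far $w'$ is adjacent to the \emph{same fixed} set $N(x')\cap V_0$. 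In the complementary case (every vertex of $W_0$ has at least $s-1$ neighbors in $V_1'$) no common set can be pinned down this way; there the paper applies Observation \ref{bigsmall} twice to extract a large set $X''\subseteq W_0'$ of degree-$(t-1)$ vertices with exactly $s-1$ neighbors outside $W_0$, finds an induced matching of size $2$ in $(W_0,X'')$, and builds the nice core directly from the neighborhoods of the matched vertices. Your sketchy reduction ``many different witnessing $(s-1)$-sets is incompatible with no nice core'' does not describe either mechanism, and leaving it unproved means the argument is incomplete exactly at the step that carries the claim.

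Two further concrete errors: (i) you assert that a vertex $u\in W_0$ has all its neighbors in $V_0'$ --- membership in $W_0$ only says $d(u)=t-1$ and says nothing about where those neighbors lie, and indeed whether $u$ has few or many neighbors in $V_1'$ is exactly the dichotomy the proof needs; (ii) your dichotomy from Observation \ref{threepath} conflates ``only one vertex from $R=W_0$'' with ``only one vertex from the $U$-side,'' which is why your intermediate analysis collapses into the self-corrections visible in the write-up. Finally, note that the target set of common neighbors lives in $V_0$ (on the same side as $W_0$), which is delivered naturally by the $x'$-argument above; your attempt lands the witnessing sets in $V_0'$ and then appeals to an unstated ``symmetric version,'' which is not available without redoing the argument from the correct case split.
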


\begin{proof}
    \textbf{Case 1}: There exists a vertex $w\in W_0$ that has at most $s-2$ neighbors in $V_1'$. Let $W_1'\subseteq W_0'$ be the set of vertices that are at distance more than three away from $w$ in the graph induced by $(U\setminus V_0, W_0')$. It follows from Observation \ref{threepathlarge} that $|W_1'|=n-o(n)$. Let $x'\in W_0'$ be a neighbor of $w$ (such a vertex exists because $w$ has only $s-2$ neighbors in $V_1'$ but it has degree $t-1$). Note that $x'$ has exactly $s-1$ neighbors in $V_0$. We will show that every vertex in $W_1'$ also has the same neighbors as $x'$ in $V_0$, resulting in the desirable structure.
    
    Take any $w'\in W_1'$ and consider a copy of $K_{s,t}$ created by adding the edge $ww'$. Since $w$ has at most $s-2$ neighbors in $V_1'$ and $w'$ has exactly $s-1$ neighbors in $V_0$, creating a copy of $K_{(t, s)}$ involves at least two vertices from $U\setminus V_0$ and two vertices from $W_0'$, contradicting Observation \ref{threepath}. Thus, a copy of $K_{(s, t)}$ is created. Since $w\in W_0$ has degree exactly $t-1$, $x'$ is used in the $K_{(s, t)}$. Once again, by Observation \ref{threepath}, this copy of $K_{(s, t)}$ cannot use any more vertices in $U\setminus V_0$ and thus must use all $s-1$  neighbors of $x'$ in $V_0$, proving $w'$ shares the same neighbors with $x'$ in $V_0$.
    \smallskip
    
    \textbf{Case 2}: Every vertex in $W_0$ has at least $s-1$ neighbors in $V_1'$. Setting $R_{big}=W_0$ and $R_{small}=V_1'$, applying Observation \ref{bigsmall} provides us a large set $X'\subseteq W_0'$ with degree exactly $t-1$. Another application of Observation \ref{bigsmall} with $R_{big} = X'$ and $R_{small} = U \setminus W_0$ provides us another large set $X'' \subseteq X'$ with exactly $s-1$ neighbors in $U \setminus W_0$. Our goal is to find an induced matching of size two within the sets $(W_0, X'')$ in order to find a nice core. Since the vertices in $W_0$ and $X''$ have degree $t-1$ and $|X''|$ is large, there exist two vertices $w', x'\in X''$ that do not share a common neighbor in $W_0$. Since the vertices in $X''$ have exactly $s-1$ neighbors in $U \setminus W_0$, both $w'$ and $x'$ have at least one neighbor in $W_0$, say $w$ and $x$ respectively. Note that $wx'$ and $xw'$ are not edges and all four vertices have degree exactly $t-1$. 
    
    Now consider adding the edge $wx'$. If it creates a copy of $K_{(s, t)}$, it uses all $t-1$ neighbors of $w$ and thus also uses $w'$. Then setting $A=N(w')$ and $A'=N(w)$ provides the desirable core. A similar argument can be made if a copy of $K_{(t, s)}$ was created, contradicting our assumption. 
\end{proof}

Following from the previous claim, we can refine the set $W_0'$ further into a large set $W_1'\subseteq W_0'$ such that every vertex $w'\in W_1'$ has the same exact $s-1$ neighbors in $V_0$. Let $\tilde{V} \subseteq V_0$ be that set of common neighbors. The next claim will refine the set $W_1'$ even further.

\begin{claim} \label{c3}
If $G$ does not contain a nice core, then there exists $ W_2'\subseteq W_1'$ where $|W_2'|=n-o(n)$ and $N(w')\subseteq \tilde{V} \cup W_0$ for every $w'\in W_2'$. Furthermore, if $W_1\subseteq W_0$ is defined as the set of vertices with at most $s-2$ neighbors in $U'\setminus W_2'$, then $W_1$ is non-empty and $|W_0 \setminus W_1|< |W_2'|-3n^{3/4}$. 
\end{claim}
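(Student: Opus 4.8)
## Proof Proposal for Claim \ref{c3}

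The plan is to proceed by a case analysis similar to the proof of Claim \ref{c2}, first establishing the containment $N(w') \subseteq \tilde{V} \cup W_0$ for most $w' \in W_1'$, and then deriving the counting consequence about $W_1$. For the first part, suppose some vertex $w \in W_0$ has at least $s-1$ neighbors in $V_1' = U' \setminus W_0'$; by Observation \ref{bigsmall} with $R_{big} = W_0$ and $R_{small} = V_1'$ we would obtain a large subset of $W_0'$ of degree exactly $t-1$, and then a further refinement (as in Case 2 of Claim \ref{c2}) produces an induced matching of size $2$ inside sets of degree-$(t-1)$ vertices, which upon adding an edge forces either a $K_{(s,t)}$ or $K_{(t,s)}$ using all $t-1$ neighbors of one endpoint — yielding the nice core and a contradiction. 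So we may assume every $w \in W_0$ has at most $s-2$ neighbors in $V_1'$. Now fix an arbitrary $w \in W_0$ and let $W_2' \subseteq W_1'$ be the vertices at distance more than $3$ from $w$ in the graph induced by $(U \setminus V_0, W_0')$; by Observation \ref{threepathlarge}, $|W_2'| = n - o(n)$. For $w' \in W_2'$, add the edge $ww'$: since $w$ has at most $s-2$ neighbors outside $W_0'$ and $w'$ already has its $s-1$ neighbors $\tilde V$ inside $V_0$, the analysis of Observation \ref{threepath} forces the new copy to be a $K_{(s,t)}$ using only $w$ from $U \setminus V_0$ and only $w'$ from $W_0'$; hence all $t-1$ neighbors of $w'$ lie in $\{w\}$'s side appropriately — more precisely, the $s-1$ vertices of $U$ in this $K_{(s,t)}$ other than $w$ must be in $V_0$ and equal $\tilde V$, while every neighbor of $w'$ is forced into $\tilde V \cup W_0$. (One has to be slightly careful: the argument shows $N(w') \subseteq \tilde V \cup W_0$ for $w' \in W_2'$, which is exactly what we want.)

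For the second part, define $W_1 \subseteq W_0$ as the set of vertices with at most $s-2$ neighbors in $U' \setminus W_2'$. To see $W_1 \neq \emptyset$ and the quantitative bound, I would count edges between $W_0$ and $U' \setminus W_2'$. Every vertex of $W_0$ has degree exactly $t-1$. Each $w' \in W_2'$ has $N(w') \subseteq \tilde V \cup W_0$ with exactly $s-1$ neighbors in $\tilde V$, so $w'$ sends exactly $t-1-(s-1) = t-s$ edges into $W_0$; summing, $e(W_0, W_2') = (t-s)|W_2'|$. Since $e(W_0, U') = (t-1)|W_0|$ and $|W_0| = n - o(n)$ while $|W_2'| = n - o(n)$, the edges from $W_0$ landing outside $W_2'$ number $e(W_0, U' \setminus W_2') = (t-1)|W_0| - (t-s)|W_2'|$, which is $(s-1)n - o(n)$. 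If every vertex of $W_0$ had at least $s-1$ neighbors outside $W_2'$, this count would be $\ge (s-1)|W_0| = (s-1)n - o(n)$, consistent — so I need a sharper accounting. The key point is that $|U' \setminus W_2'| = o(n)$, and the vertices in $\tilde V$ and $V_0$ absorb only $O(n^{3/4})$ of these edges on the $U'$ side; a cleaner route is to bound directly how many $w \in W_0$ can have $\ge s-1$ neighbors in the small set $U' \setminus W_2'$: each such $w$ contributes $\ge s-1$ to $e(W_0, U'\setminus W_2')$, and since $|U' \setminus W_2'| \le 3n^{3/4} + |V_1'| + (\text{slack})$ — actually I will instead use that $W_2' \subseteq W_1' \subseteq W_0'$ and track that $|W_0' \setminus W_2'| \le 3n^{3/4}$ by construction (the only vertices removed from $W_1'$ to form $W_2'$ are the $\le 3n^{3/4}$ vertices near $w$). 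Combined with $|U' \setminus W_0'| = o(n)$ being a genuinely smaller order, a double-counting of $e(W_0 \setminus W_1, U' \setminus W_2')$ versus the total available $(s-1)|U'\setminus W_2'| + \text{(degree budget)}$ gives $|W_0 \setminus W_1| < |W_2'| - 3n^{3/4}$, and in particular $W_1 \neq \emptyset$ since $|W_0| > |W_2'| - 3n^{3/4} + |W_0 \setminus W_1|$ would otherwise fail against $|W_0| = n-o(n)$.

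The main obstacle I anticipate is the precise bookkeeping in the second part: getting the explicit constant $3n^{3/4}$ to come out requires carefully identifying which "small" sets the stray edges from $W_0$ can go to, and verifying that the only unavoidable loss of $3n^{3/4}$ vertices comes from the single application of Observation \ref{threepathlarge} used to pass from $W_1'$ to $W_2'$. In particular I will want to choose the vertex $w$ defining $W_2'$ after knowing enough structure, or alternatively show the containment $N(w') \subseteq \tilde V \cup W_0$ holds for all of $W_1'$ minus a $3n^{3/4}$-sized exceptional set for each choice of $w$, and then argue that a vertex $w' \in W_2'$ with a neighbor in $W_0 \setminus W_1$ creates a contradiction (a $K_{(s,t)}$ or the nice core) — this would let me conclude that $W_0 \setminus W_1$ is small directly rather than through a global edge count. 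The expected payoff of the claim is that $W_1$ is a nonempty set of low-degree-into-the-complement vertices on the $U$ side, mirroring the role of $W_0$ on the $U'$ side, which sets up the final symmetric reduction to the core analysis in Section \ref{sec:core}.
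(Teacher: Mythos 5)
Your argument for the first part has a genuine gap. You fix $w \in W_0$ with few neighbors in $V_1'$, add $ww'$ for $w'$ far from $w$, deduce that the created copy is a $K_{(s,t)}$ with $U$-side $\{w\} \cup \tilde{V}$, and then assert "every neighbor of $w'$ is forced into $\tilde V \cup W_0$." This does not follow. The created $K_{(s,t)}$ uses $w$, $\tilde{V}$, $w'$, and $t-1$ further vertices of $U'$; the structure of the copy tells you those $t-1$ other $U'$-vertices lie in $N(w) \cap N(\tilde V)$, but places no restriction at all on where $w'$'s own neighbors are. (This is exactly the argument of Case 1 in Claim \ref{c2}, and there it only concludes that $w'$'s $V_0$-neighbors coincide with $\tilde V$ — not the stronger containment $N(w') \subseteq \tilde V \cup W_0$ that Claim \ref{c3} requires.) The paper obtains the containment in one line by a single application of Observation \ref{bigsmall} with $R_{big} = W_1'$ and $R_{small} = U \setminus W_0$: every $w' \in W_1'$ has its $s-1$ neighbors $\tilde V$ inside $U \setminus W_0$, so $n - o(n)$ of them have \emph{exactly} $s-1$ neighbors there, forcing all remaining neighbors into $W_0$.

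For the "furthermore" part, your edge-counting sketch does not close (you acknowledge it is "consistent") and the proposed fixes are either incorrect or unfinished: $e(W_0, W_2') = (t-s)|W_2'|$ would require vertices of $W_2'$ to have degree exactly $t-1$, which is not established; $\tilde V$ and $V_0$ lie on the $U$ side and so cannot "absorb" edges from $W_0$ to $U' \setminus W_2'$; and the bound $|W_1' \setminus W_2'| \le 3n^{3/4}$ only holds for your distance-based construction of $W_2'$, which is precisely the construction whose first-part conclusion fails. The paper instead argues by contradiction: if $|W_0 \setminus W_1| \ge \min(|W_0|,\, |W_2'| - 3n^{3/4}) = n - o(n)$, then every vertex of $W_0 \setminus W_1$ has at least $s-1$ neighbors in the small set $U' \setminus W_2'$, so Observation \ref{bigsmall} (with $R_{big} = W_0 \setminus W_1$, $R_{small} = U' \setminus W_2'$) produces a large $X' \subseteq W_2'$ of degree-$(t-1)$ vertices; repeating the induced-matching argument of Case 2 of Claim \ref{c2} between $W_0$ and $X'$ then yields a nice core, a contradiction. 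Your proposal does not reach this mechanism.
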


This technical claim implies that the size of the set $W_0 \setminus W_1$ cannot be very close to $n$.

\begin{proof}
    Set $R_{big}=W_1'$ and $R_{small}=U \setminus W_0$, applying Observation \ref{bigsmall} provides a large set $W_2'\subseteq W_1'$ such that all vertices $w'\in W_2'$ have exactly $s-1$ neighbors in $U \setminus W_0$. Since $W_2'\subseteq W_1'$, every vertex in $W_2'$ already has $s-1$ neighbors in $\tilde{V}$, implying all its other neighbors are in $W_0$. Then, it remains to check the condition in the furthermore part of the claim. 
    
    Suppose for the sake of contradiction that $|W_0\setminus W_1|\ge \min \left(|W_0|, |W_2'|-3n^{3/4}\right) = n-o(n)$. We can use a similar argument as Case 2 of the previous claim to find the nice core. Consider $R_{big}= W_0\setminus W_1$ and $R_{small}=U'\setminus W_2'$. Applying Observation \ref{bigsmall} obtains a large set $X'\subseteq W_2'$  with degree exactly $t-1$. Once again, one can find two vertices $w', x'\in X'$ with no common neighbors in $W_0$. Then one can find two vertices $w, x\in W_0$ such that $ww'$ and $xx'$ are edges but $wx'$ and $xw'$ are not. Lastly, by adding the edge $wx'$, one can find a nice core by either taking $(N(w'), N(w))$ or $(N(x'), N(x))$, a contradiction. 
\end{proof}

The condition stated in the furthermore part of the previous claim might look rather arbitrary but it is actually motivated by the last portion of the proof in the next claim. At some point, we need to find a special vertex in $W_2'$, but we might have to first eliminate as many as $|W_0\setminus W_1|$ vertices and an additional $3n^{3/4}$ (due to a distance more than three requirement). The condition essentially ensures the existence of that vertex.

\begin{claim} \label{c4}
If $G$ does not contain a nice core, then all the vertices in $U'$ are adjacent to the vertices in $\tilde{V}$.
\end{claim}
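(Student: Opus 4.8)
The goal is to upgrade the conclusion of Claim~\ref{c3} --- that $n-o(n)$ vertices of $U'$ are adjacent to all of $\tilde V$ --- to the statement that \emph{every} vertex of $U'$ is adjacent to every vertex of $\tilde V$. The natural strategy is a proof by contradiction: suppose some $z' \in U'$ misses some vertex $v \in \tilde V$. I would like to add the edge $z'v$ (or rather, exhibit a missing edge I can add) and derive that the created copy of $K_{s,t}$, together with the structure already assembled in Claims~\ref{c1}--\ref{c3}, forces a nice core. The key structural facts I may use are: $|\tilde V| = s-1$; every $w' \in W_1'$ has $N(w') \cap V_0 = \tilde V$; every $w' \in W_2'$ has $N(w') \subseteq \tilde V \cup W_0$; the vertices of $W_1 \subseteq W_0$ have at most $s-2$ neighbours in $U' \setminus W_2'$ and $W_1 \ne \emptyset$ with $|W_0 \setminus W_1| < |W_2'| - 3n^{3/4}$; and every vertex of $W_0$ has degree exactly $t-1$.

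\textbf{Key steps.} First I would pick a witness $z' \in U'$ not adjacent to some $v \in \tilde V$. Next, using Observation~\ref{threepathlarge} applied to a suitable pair of large sets containing $z'$ on one side and $W_2'$ (minus a few vertices) on the other, I would locate a vertex $w \in W_1$ (this is where the furthermore clause of Claim~\ref{c3} is used: I need $w \in W_1$ surviving after deleting $W_0 \setminus W_1$ and the $O(n^{3/4})$ vertices close to $z'$ that I throw away) such that $w$ is at distance more than $3$ from $z'$ in an appropriate induced subgraph and $wz'$ is a non-edge. Since $w$ has degree exactly $t-1$, adding $wz'$ creates a copy of $K_{(t,s)}$ or $K_{(s,t)}$; by Observation~\ref{threepath} (with $R = $ the $U$-side set and $R' = $ the $U'$-side set) at most one vertex is used from one of the two sides, and combined with the distance-$3$ separation I should be able to pin down which type of copy appears and force it to use almost all of $N(w)$ and one extra vertex, eventually identifying a pair $(N(x'), N(x))$ or $(N(w'), N(w))$ that is an induced matching of size $2$ with a $K_{s,t-1}$ inside --- i.e.\ a nice core. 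The contradiction with the hypothesis ``$G$ does not contain the nice core'' then finishes the claim.

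\textbf{Main obstacle.} The delicate point is the case analysis on whether adding $wz'$ creates a $K_{(s,t)}$ or a $K_{(t,s)}$, and in each case showing the created copy is forced to interact with $\tilde V$ and with $W_0$ in a way that produces the induced matching. In particular, because $z'$ might have many neighbours spread across $V_0$, I need the distance-$3$ argument to be robust: the point of choosing $z'$ missing a vertex of $\tilde V$, rather than an arbitrary $z'$, is presumably that it caps how many of $z'$'s neighbours can lie in the ``good'' part of the structure, forcing the new copy to reach outside and thereby create the second matching edge. Getting the counting to close --- ensuring the vertex $w \in W_1$ I need actually exists after all deletions, which is exactly what the inequality $|W_0 \setminus W_1| < |W_2'| - 3n^{3/4}$ guarantees --- is the other place where care is required, but it is bookkeeping rather than a genuine difficulty. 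I expect the proof to mirror Case~1 of Claim~\ref{c2} and the contradiction argument of Claim~\ref{c3} quite closely, with $\tilde V$ playing the role that the common-neighbourhood set played there.
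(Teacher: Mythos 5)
Your plan only lines up with a small part of the paper's argument and has a genuine gap in its central mechanism. First, the paper does not argue by contradiction against the nice-core hypothesis inside this claim: it picks an arbitrary $w' \in U' \setminus W_2'$ and directly shows $w'$ is adjacent to all of $\tilde V$, without ever constructing a nice core here. (The ``contradicting Observation~\ref{threepath}'' at the end of the paper's Case~3 is a contradiction to that observation, not to the absence of a nice core.) Your endgame of manufacturing an induced matching and a $K_{s,t-1}$ is the wrong target for this claim; the nice-core hypothesis is consumed via Claims~\ref{c2} and~\ref{c3}, not re-derived.

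Second, and more seriously, your plan of finding $w \in W_1$ at distance more than $3$ from $z'$ and adding $wz'$ cannot be carried out in general, because the target $z'$ may lie in $V_0'$ (it has no degree bound). Observation~\ref{threepathlarge} requires both sides of the bipartite subgraph to avoid $V_0$, $V_0'$, so $z'$ cannot be one of the endpoints in a distance argument. The paper dodges this by a three-way case analysis on how $w'$ meets $W_0$ and $W_1$: in Case~1 ($N(w')\cap W_0 = \emptyset$) it adds $ww'$ directly with $w\in W_1$ and needs no distance bound; in Cases~2 and~3 ($w'$ has a neighbor $w$ in $W_1$, or in $W_0\setminus W_1$) it does \emph{not} add an edge at $w'$, but rather adds $wx'$ to a distant $x'\in W_2'$ (both $w$ and $x'$ are low-degree, so the distance machinery applies) and then uses the fact that $w$ has degree exactly $t-1$ to force the created copy to include $w'$ among $N(w)$, which in turn forces $w'$ onto $\tilde V$. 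Case~3 additionally needs a vertex $x'\in W_2'$ with few neighbors in $W_0\setminus W_1$; that is where the Handshake-Lemma count and the furthermore clause of Claim~\ref{c3} are used. You correctly identified that the furthermore clause and $W_1$ are relevant, but without the case split, the ``add $wx'$ instead of $ww'$'' trick, and a different endgame, the proof does not close.
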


\begin{proof}
    Let $w'\in U'\setminus W_2'$. We break into three cases depending on $w'$'s interaction with $W_0$ and $W_1$. 
    \smallskip
    
    \textbf{Case 1}: $N(w')\cap W_0=\emptyset$. Let $w\in W_1$. Consider a copy of $K_{s,t}$ created by adding $ww'$. Since $w\in W_1$ has at most $s-2$ neighbors outside of $W_2'$, the copy uses at least one vertex $x'\in W_2'$. Since $w'$ has no neighbors in $W_0$, the $K_{s,t}$ uses at least $s-1$ vertices from $U\setminus W_0$. Since $x'\in W_2'$, the only neighbors it has outside of $W_0$ are the $s-1$ vertices in $\tilde{V}$. It follows that $w'$ is also adjacent to all the vertices in the set $\tilde{V}$.
    \smallskip
    
    \textbf{Case 2}: $w'$ has a neighbor $w\in W_1$. Let $x'\in W_2'$ be a vertex at distance more than three away from $w$ in the graph induced by $(W_0, W_2')$. Consider the copy of $K_{s,t}$ created by adding the edge $wx'$. Since $w$ has at most $s-2$ neighbors outside of $W_2'$, the $K_{s,t}$ uses at least two vertices in $W_2'$. It follows from Observation \ref{threepath} that the $K_{s,t}$ must use at least $s-1$ vertices outside of $W_0$. However, the only neighbors of $x'$ outside of $W_0$ are the $s-1$ vertices in $\tilde{V}$. This implies only a copy of $K_{(s, t)}$ can be created and it uses all $s-1$ vertices in $\tilde{V}$ and all $t-1$ neighbors of $w$. Since $w'\in N(w)$, it follows that $w'$ is adjacent to all vertices in $\tilde{V}$. 
    \smallskip
    
    \textbf{Case 3}: $N(w')\cap W_1=\emptyset$ but $N(w')\cap W_0\neq \emptyset$. Let $w\in N(w')\cap W_0$. We claim that there exists a vertex $x'\in W_2'$ that has less than $t-s$ many neighbors in $W_0\setminus W_1$ and is at distance more than three away from $w$ in the graph induced by $(W_0, W_2')$.
    
    Let $X'\subseteq W_2'$ be the set of vertices with at least $t-s$ neighbors in $W_0\setminus W_1$. Note that the vertices in $W_0\setminus W_1$ have degree exactly $t-1$ and have at least $s-1$ neighbors outside of $W_2'$. Then, each vertex in $W_0\setminus W_1$ has at most $t-s$ neighbors in $X'$. Then, by the Handshake Lemma, it follows that $|X'|\le |W_0\setminus W_1|$. Since every vertex in $(W_0, W_2')$ has degree at most $n^{1/4}$, the set of vertices at distance three or less from $w$ is at most $3n^{3/4}$. Then, it follows from the furthermore condition in the previous claim that there exists a vertex $x'\in W_2'$ at distance more than three away from $w$ and has less than $t-s$ neighbors in $W_0\setminus W_1$.
    
    Now, consider the copy of $K_{s,t}$ created by adding the edge $wx'$. Suppose we created a copy of $K_{(s, t)}$. Since $w$ has degree $t-1$, all of its neighbors are used, including $w'$. Since every vertex in $W_0$ has degree exactly $t-1$, the $K_{(s, t)}$ must use $s-1$ other vertices from outside of $W_0$. However, the only neighbors of $x'$ outside of $W_0$ are the $s-1$ vertices in $\tilde{V}$, implying that $w'$ is also adjacent to $\tilde{V}$. 
    
    However, if a copy of $K_{(t, s)}$ is created, since $x'$ has $s-1$ neighbors outside of $W_0$ and less than $t-s$ neighbors in $W_0\setminus W_1$, the $K_{(t, s)}$ uses at least one more vertex $u\in W_1$. Since $u$ has at most $s-2$ neighbors outside $W_2'$, the $K_{(t, s)}$ uses one more vertex in $W_2'$ other than $x'$, contradicting Observation \ref{threepath}. 
\end{proof}

\begin{proof}[Proof of Lemma \ref{findcore}]
    Using Claim \ref{c4}, we can conclude that if $G$ does not contain a nice core, then $G$ has the following structure: there is a set $\tilde{V} \subseteq U$ of size $s-1$ such that $(\tilde{V}, U')$ induces a complete bipartite graph $K_{(s-1,n)}$. Moreover, every vertex in $U$ has degree at least $t-1$. We claim that every vertex in $U \setminus \tilde{V}$ has degree exactly $t-1$. Suppose not, then find $u \in U \setminus \tilde{V}$ with degree at least $t$. Pick a set $T' \subseteq U'$ of $t$ neighbors of $u$, then $\{u\} \cup \tilde{V}$ and $T'$ induce a copy of $K_{(s,t)}$, a contradiction. Thus, every vertex in $U\setminus \tilde{V}$ has degree exactly $t-1$, resulting in $G$ being a graph in $\F_{s,t}^n$, completing our proof. 
\end{proof}

\section{When $G$ contains a nice core}
\label{sec:core}

Let $G$ be a $K_{s,t}$-saturated $n$ by $n$ bipartite graph with minimum degree at least $t-1$ and with minimum possible number of edges. Furthermore, assume that $G$ contains a copy of the nice core (sets $A\subseteq U$ and $A'\subseteq U'$ of size $t-1$ containing vertices $a_0 \in A$ and $a_0' \in A'$ with $N(a_0)=A'$ and $N(a_0')=A$, and containing a copy of $K_{s,t-1}$). Our goal in this section is to show that $G$ has at least $(s+t-2)n - (t-1)(t-3) - s - \left\lfloor \frac{(s-1)^2}{4}\right\rfloor$ many edges. Moreover, when $s = t-1$, we show that $G$ has to be one of the graphs in $\F_{t-1,t}^n$. Our techniques in this section are similar to the ones used in \cite{GKS} with some modifications for the special case $s=t-1$.

\begin{lemma} \label{nicecore}
If $G$ contains a nice core, then $G$ contains at least $(s+t-2)n - (t-1)(t-3) -s - \left\lfloor \frac{(s-1)^2}{4}\right\rfloor$ many edges. Moreover, when $s=t-1$, then $G \in \F_{t-1,t}^n$.
\end{lemma}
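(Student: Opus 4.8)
# Proof Proposal for Lemma \ref{nicecore}

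The plan is to leverage the nice core $A \cup A'$ as a seed from which one can generate, via edge-additions, a pair of large "complementary" sets playing the roles of $V$ and $U' \setminus N(u_0)$ from Proposition \ref{mindeg}, but now with a denser small part. First I would fix the copy of $K_{s,t-1}$ guaranteed inside the core and identify the $s-1$ vertices on the $A$-side of that copy; call this set $S \subseteq A$. Because $N(a_0) = A'$, the vertex $a_0$ is missing every edge to $U' \setminus A'$, so for each $u' \in U' \setminus A'$ adding $a_0 u'$ creates a copy of $K_{s,t}$ (ordered either way) that must use $a_0$ together with $t-1$ or $s-1$ of its neighbors, all of which lie in $A'$; the other side of this copy consists of $u'$ together with either $s-1$ other vertices of $U$ (an $K_{(s,t)}$) or $t-1$ other vertices of $U$ (a $K_{(t,s)}$). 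Mirroring with $a_0'$ on the other side, I get for free that almost every vertex of $U' \setminus A'$ sees at least $s-1$ vertices among a bounded set related to $A$, and likewise on the other side — this is exactly the hypothesis of Observation \ref{bigsmall}, applied with the small set contained in (a bounded neighborhood of) the core.

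The key steps, in order, would be: (1) Show that the core forces a bounded set $B \supseteq A$, $B' \supseteq A'$ of size $O(1)$ such that every vertex outside sees $\geq s-1$ neighbors inside $B$ or $B'$; deduce via Observation \ref{bigsmall} that $n - o(n)$ vertices on one side have degree exactly $t-1$ and $n - o(n)$ vertices on the other side have exactly $s-1$ neighbors in the small set. (2) Run the counting argument of Proposition \ref{mindeg} with this improved small set: the edges split as $e(V, \text{small}) + e(V, \text{big}) + e(\text{rest}, \cdot)$, giving roughly $(s-1)n + (t-1)n$ from the two large contributions, and then carefully account for the edges \emph{inside} the two small sets. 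The point of the nice core (the embedded $K_{s,t-1}$) is that the small sets are forced to contain many internal edges — the core itself contributes a near-$K_{t-1,t-1}$-like density — and optimizing this internal count is where the $-(t-1)(t-3) - s - \lfloor (s-1)^2/4 \rfloor$ correction comes from, replacing the weaker $-(t-1)(t-2)$ of the minimum-degree case. (3) For $s = t-1$ specifically, push the inequalities to equality: show the small sets must be exactly $A$ and $A'$ (of size $t-1$ each), that $(A,U')$ and $(U,A')$ behave like the defining structure of $\F^n_{t-1,t}$, and that any deviation (a vertex of degree $> t-1$ off the core, or a missing core edge) either creates a forbidden $K_{(t-1,t)}$ or contradicts minimality, exactly as in the two propositions of Section \ref{sec:prelim}.

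The main obstacle I anticipate is step (2)'s bookkeeping of the edges internal to the small sets, together with correctly propagating the "$n - o(n)$" exceptional-vertex slack through to an \emph{exact} bound with no error term. In Proposition \ref{mindeg} one had a genuine low-degree vertex and a clean partition; here the nice core only gives structure up to $o(n)$ exceptions, so I would need to argue that the $o(n)$ atypical vertices cannot collectively save more than a constant number of edges — presumably by another round of edge-addition arguments showing each atypical vertex still contributes its "fair share" $s+t-2$ to the count, or is absorbed into the bounded core. Handling the two ordered orientations $K_{(s,t)}$ vs $K_{(t,s)}$ uniformly (as in Claims \ref{c2}–\ref{c4}) will also require care, since a created $K_{(t,s)}$ can a priori drag in extra core vertices; ruling this out should again follow from the distance/size observations (Observations \ref{threepath} and \ref{threepathlarge}) applied within the bounded core. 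Finally, for the $s=t-1$ rigidity I expect the delicate point to be excluding a vertex outside the core with an "extra" edge into the core without it completing a $K_{(t-1,t)}$ — this will use that the core already contains a $K_{s,t-1} = K_{t-1,t-1}$, so one extra common neighbor on either side finishes the forbidden graph.
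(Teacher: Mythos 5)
Your proposal mis-identifies the machinery to use: Observations \ref{bigsmall}, \ref{threepath}, and \ref{threepathlarge} are the tools of Section \ref{sec:asymp}, and they only yield statements about $n - o(n)$ vertices. That is fine for \emph{locating} a nice core, but it cannot by itself produce an exact bound of the form $(s+t-2)n - O(1)$: you candidly flag the obstacle of converting the $o(n)$ exceptional vertices into a sharp constant correction, and you do not resolve it. This is not a small bookkeeping issue — without an exact argument, the $o(n)$ slack swamps the additive constant you are trying to pin down, and your "each atypical vertex still contributes its fair share" heuristic is exactly the part that would need a full proof and is missing.

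The paper's actual argument in Section \ref{sec:core} avoids the asymptotics entirely. Once a nice core $(A,A')$ is fixed, it defines a \emph{deterministic, exact} partition: $B$ is the set of vertices of $U\setminus A$ with at least $s-1$ neighbors in $A'$, $C = U\setminus(A\cup B)$; then $B$ is refined into $B_1$ (at least $t-1$ neighbors in $A'\cup B'$) and $B_2$, and $C$ into $C_1$ (at least $s-1$ neighbors outside $B_2'$) and $C_2$ — symmetrically on the primed side. Two structural observations (every $v\in C$ has $\ge t-1$ neighbors in $A'\cup B'$, and $(C_2,C_2')$ is complete bipartite) then feed into the exact edge count of Claim \ref{core:bound}: $e(B_1',A\cup B)+e(B_2',U)+e(C',A\cup B)\ge(t-1)(n-t+1)$ and $e(B,A')+e(C_1,A'\cup B_1'\cup C')+e(C_2,C_2')\ge(s-1)(n-t+1)-\lfloor(s-1)^2/4\rfloor$, plus the $\ge s(t-1)+(t-1-s)$ edges inside the core. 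No $o(n)$ terms appear anywhere. For the rigidity at $s=t-1$, the paper does not merely "push inequalities to equality" in the abstract; it runs a case analysis (Claims \ref{three}--\ref{case1}) on whether $B_2,B_2',C_2,C_2'$ are empty, in each case either landing in $\F_{t-1,t}^n$ or deriving a strict surplus of edges. Your sketch of step (3) is too coarse to recover this.

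So: the idea of using $a_0$ and $a_0'$ as low-degree seeds is on the right track, and your intuition that the core's internal $K_{s,t-1}$ density pays for the improved constant is correct. But you should replace the Section-\ref{sec:asymp}-style asymptotic partitioning by the fixed six-part partition $\{A,B_1,B_2,C_1,C_2\}$ (and its mirror) described above, and count edges exactly. That is the step your proposal is missing.
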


    We partition $U \setminus A$ into two parts by defining $B$ to be the set of vertices in $U \setminus A$ having at least $s-1$ neighbors in $A'$, and $C$ to be the set $U \setminus (A \cup B)$. Similarly, define $B'$ and $C'$. Furthermore, break $B$ into two parts $B_1$ and $B_2$, by defining $B_1$ to be the set of vertices having at least $t-1$ neighbors in $A' \cup B'$. Similarly, define $B_1'$ and $B_2'$. Now, break $C$ into two parts $C_1$ and $C_2$, by setting $C_1$ to be the vertices in $C$ having at least $s-1$ neighbors outside of $B_2'$. Similarly, define $C_1'$ and $C_2'$.  Refer to Figure \ref{fig:core} for a graphic representation of these partitions. Keep in mind that the partition on the $U'$ side is defined symmetrically to the $U$ side. 
    
    We first provide two observations. These were also noted in \cite{GKS} but we will include their proofs for the sake of completeness.

\begin{figure}
    \centering
    \includegraphics[scale=0.8]{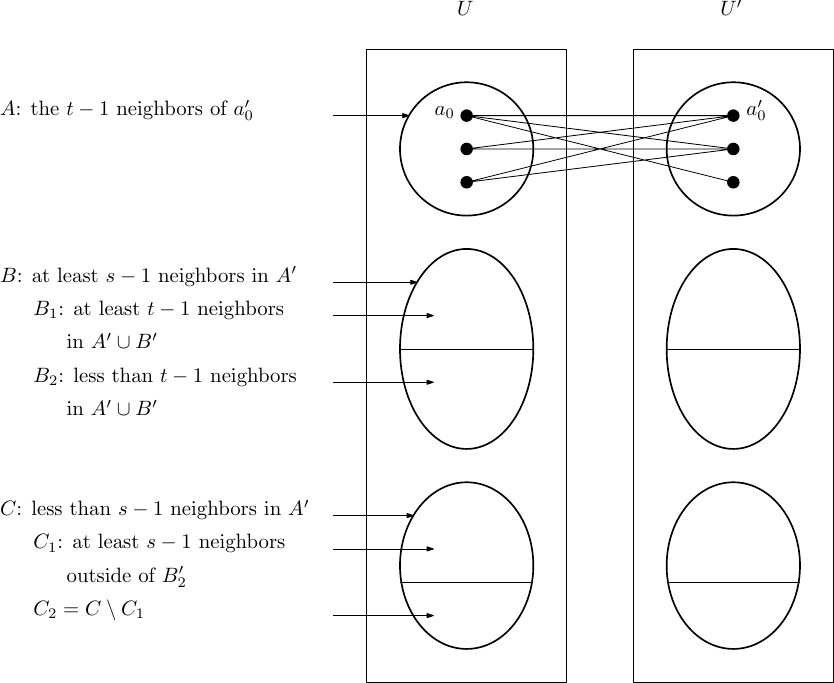}
    \caption{Example of a nice core and its related partitions}
    \label{fig:core}
\end{figure}

\begin{observation} \label{obs}
    Every vertex $v\in C$ has at least $t-1$ neighbors in $A'\cup B'$. Similarly, every vertex in $C'$ has at least $t-1$ neighbors in $A\cup B$. 
\end{observation}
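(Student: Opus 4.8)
The plan is to use the saturation property directly: fix $v\in C$ and add the edge $va_0'$. This is a genuine non-edge of $G$, since $N(a_0')=A$ while $v\in U\setminus A$. Adding it must create a copy of $K_{s,t}$, and as $G$ itself is $K_{s,t}$-free this new copy uses the edge $va_0'$; hence it contains $v$ on the $U$-side and $a_0'$ on the $U'$-side. I will split into the two orientations, $K_{(t,s)}$ (with $t$ vertices in $U$) and $K_{(s,t)}$ (with $s$ vertices in $U$), ruling out the first while extracting the desired neighbours of $v$ from the second.

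Suppose a $K_{(t,s)}$ is created. Its $U'$-side consists of $s$ vertices, one of which is $a_0'$, and in $G+va_0'$ we have $N(a_0')=A\cup\{v\}$; since $a_0'$ is adjacent to all $t$ vertices of the $U$-side, those $t$ vertices lie in $A\cup\{v\}$, and as $|A\cup\{v\}|=t$ they are exactly $A\cup\{v\}$. Now each of the remaining $s-1$ vertices of the $U'$-side is adjacent in $G$ to $v$ and to every vertex of $A$, in particular to $a_0$, hence lies in $N(a_0)=A'$. This gives $s-1$ neighbours of $v$ inside $A'$, contradicting $v\in C$, which by definition has at most $s-2$ neighbours in $A'$. (When $s=1$ there is nothing to prove since $C=\emptyset$, so we may assume $s\ge2$ and this contradiction is genuine.)

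Therefore a $K_{(s,t)}$ is created, with $v$ among its $s$ vertices in $U$ and $a_0'$ among its $t$ vertices in $U'$. Arguing as above with $N_{G+va_0'}(a_0')=A\cup\{v\}$, the $U$-side of this copy is contained in $A\cup\{v\}$, so it equals $\{v\}$ together with $s-1$ vertices of $A$. Consequently each of the $t-1$ vertices on the $U'$-side other than $a_0'$ is adjacent in $G$ to $v$ and to these $s-1$ vertices of $A$; a vertex of $U'$ with at least $s-1$ neighbours in $A$ lies in $A'$ or in $B'$, so each of these $t-1$ vertices lies in $A'\cup B'$. They are distinct neighbours of $v$, so $v$ has at least $t-1$ neighbours in $A'\cup B'$, as claimed. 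The statement for $C'$ follows by the mirror argument: for $v'\in C'$ add the edge $a_0v'$ (a non-edge because $N(a_0)=A'$) and repeat the case analysis with the roles of the two sides swapped.

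The only delicate point is bookkeeping about which edges are new versus already present: one must confirm that $va_0'\notin E(G)$ and that all the other edges of the freshly created $K_{s,t}$ — in particular $vw'$ for each relevant $w'\ne a_0'$ — already belong to $G$, so that conclusions drawn about $G+va_0'$ transfer to $G$. The conceptual heart is the pinned neighbourhood $N(a_0')=A$ (respectively $N(a_0)=A'$), which forces the $U$-side of the new copy to sit inside $A\cup\{v\}$ and drives both the contradiction in the $K_{(t,s)}$ case and the extraction of neighbours in the $K_{(s,t)}$ case.
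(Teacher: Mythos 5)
Your proof is correct and follows essentially the same route as the paper: add the non-edge $va_0'$, note that the $U$-side of the forced $K_{s,t}$ is pinned inside $N(a_0')\cup\{v\}=A\cup\{v\}$, rule out the $K_{(t,s)}$ orientation, and in the $K_{(s,t)}$ case read off that the $t-1$ remaining $U'$-vertices are adjacent to $s-1$ vertices of $A$ and hence lie in $A'\cup B'$. The only cosmetic difference is that you dismiss the $K_{(t,s)}$ case by directly exhibiting $s-1$ neighbours of $v$ in $A'$ (contradicting $v\in C$), whereas the paper first deduces that $a_0$ cannot be in the copy and then counts; both hinge on the pinned neighbourhoods $N(a_0')=A$ and $N(a_0)=A'$.
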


\begin{proof}
    Let $v\in C$. Consider adding the edge $va_0'$ and the created copy of $K_{s,t}$. Since $a_0'$ is part of the $K_{s,t}$, the only vertices in $U$ other than $v$ that can be used are those in $A$. Since $v$ has less than $s-1$ neighbors in $A'$, the $K_{s,t}$ uses at least one vertex outside of $A'$. This implies $a_0$ is not part of this $K_{s,t}$ and thus at most $t-2$ vertices are used from $A$. Then we can only create a copy of $K_{(s, t)}$. Since $s-1$ vertices in $A$ are used, it follows that the copy of $K_{(s, t)}$ does not contain any vertices in $C'$. Thus, $v$ has at least $t-1$ neighbors in $A'\cup B'$. By symmetry, the mirrored statement is also true. 
\end{proof}

\begin{observation} \label{obs1}
    The edges between $C_2$ and $C_2'$ form a complete bipartite graph. 
\end{observation}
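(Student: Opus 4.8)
The plan is to argue by contradiction. First note that if $s = 1$ then $B = U \setminus A$ (every vertex has at least $s - 1 = 0$ neighbors in $A'$), so $C = C_2 = \emptyset$ and the statement is vacuous; hence assume $s \ge 2$. Now suppose there exist $v \in C_2$ and $v' \in C_2'$ with $vv' \notin E(G)$. Since $G$ is $K_{s,t}$-free but $G + vv'$ is not, some copy of $K_{s,t}$ in $G + vv'$ uses the edge $vv'$; as $vv'$ joins $v \in U$ to $v' \in U'$, the two color classes of this copy lie one in $U$ and one in $U'$, with $v$ and $v'$ in opposite classes. So either the $U$-class of the copy has $s$ vertices (a created $K_{(s,t)}$) or it has $t$ vertices (a created $K_{(t,s)}$).

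The leverage comes from unpacking the definitions of $C_2$ and $C_2'$: every vertex of $C_2$ has all but at most $s-2$ of its $G$-neighbors in $B_2'$, and symmetrically for $C_2'$ and $B_2$. In the first case, write the $U$-class of the copy as $\{v\} \cup W$ and the $U'$-class as $\{v'\} \cup Z$, so $|W| = s-1$, $|Z| = t-1$, $W \subseteq N_G(v')$, and $Z \subseteq N_G(v)$. Then $|Z \cap B_2'| \ge (t-1) - (s-2) = t-s+1$ and $|W \cap B_2| \ge (s-1)-(s-2) = 1$; fix $w \in W \cap B_2$. Since $w \ne v$, the new edge $vv'$ is not incident to $w$, so in $G$ already $w$ is adjacent to every vertex of $\{v'\} \cup Z$, in particular to the at least $t-s+1$ vertices of $Z \cap B_2' \subseteq B'$. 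Because $w \in B$, it also has at least $s-1$ neighbors in $A'$, and $A'$ is disjoint from $B_2'$, so $w$ has at least $(s-1)+(t-s+1) = t$ neighbors in $A' \cup B'$. But $w \in B_2$ means $w$ has at most $t-2$ such neighbors --- a contradiction.

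The second case is the mirror image with $U$ and $U'$ interchanged: now the $U$-class is $\{v\} \cup W$ with $|W| = t-1$ and the $U'$-class is $\{v'\} \cup Z$ with $|Z| = s-1$. The $C_2'$-property of $v'$ gives $|W \cap B_2| \ge (t-1) - (s-2) = t-s+1$ and the $C_2$-property of $v$ gives $|Z \cap B_2'| \ge (s-1)-(s-2) = 1$; fix $z \in Z \cap B_2'$. As before $z$ is adjacent in $G$ to all of $\{v\} \cup W$, in particular to at least $t-s+1$ vertices of $W \cap B_2 \subseteq B$; and since $z \in B'$ it has at least $s-1$ neighbors in $A$. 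Since $A$ is disjoint from $B_2$, $z$ has at least $t$ neighbors in $A \cup B$, contradicting the fact that $z \in B_2'$ has at most $t-2$ of them. This settles both cases, so every $v \in C_2$ and $v' \in C_2'$ are adjacent.

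I do not expect a genuine obstacle: the argument is short once the split into the two types of created $K_{s,t}$ is in place. The points requiring care are purely bookkeeping --- checking that the witness vertex ($w$ or $z$) is not an endpoint of the added edge, so the required adjacencies hold already in $G$, and keeping track of which color class of the created complete bipartite graph lies in $U$ versus $U'$ in each case. The conceptual content is just the remark that a vertex of $B$ with at least $t-s+1$ neighbors in $B_2'$ is forced to have at least $t$ (in particular, at least $t-1$) neighbors in $A' \cup B'$ and hence cannot belong to $B_2$.
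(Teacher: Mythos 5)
Your proof is correct and is essentially the paper's argument: locate a vertex of $B_2$ (resp.\ $B_2'$) among the forced neighbors of $v'$ (resp.\ $v$) in the created copy, and contradict its defining degree bound in $A'\cup B'$ (resp.\ $A\cup B$). In fact your writeup is the more careful one --- the paper's version derives the same contradiction but misattributes the properties ``at least $s-1$ neighbors in $A'$'' and the membership in $B_2$ to $v$ itself (a typo for the auxiliary vertex $u\in B_2$), whereas you correctly place the contradiction on the witness $w$ (resp.\ $z$) and also note the vacuous case $s=1$.
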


\begin{proof}
    Suppose for the sake of contradiction that $vv'$ is not an edge, where $v\in C_2$ and $v'\in C_2'$. Consider adding the edge $vv'$. Without loss of generality, assume that it creates a copy of $K_{(s, t)}$. Since $v'\in C_2'$ has at most $s-2$ neighbors outside of $B_2$, the $K_{(s, t)}$ uses at least one vertex $u\in B_2$. With similar reasoning on $v$, the $K_{(s, t)}$ uses at least $t-s+1$ vertices in $B_2'$. This implies $u$ has at least $t-s$ neighbors in $B_2'$. Since  $u$ also has at least $s-1$ neighbors in $A'$, it has at least $t-1$ neighbors in $A'\cup B'$, a contradiction. 
\end{proof}

    Let us first concentrate on the case when $s = t-1$. By definition, this implies that 1) the core $(A, A')$ forms a complete bipartite graph, 2) every vertex in $B$ and $B'$ is incident to the vertices in $A'\setminus \{a_0'\}$ and $A\setminus \{a_0\}$ respectively and 3) every vertex in $B_2$ and $B_2'$ has no neighbors in $B'$ and $B$ respectively. We will prove the following claim, which in turn helps us to get the required lower bound on the number of edges of $G$.

    \begin{claim} \label{three}
    One of the following holds about $G$:
    \begin{enumerate}
        \item Either $B_2$ or $B_2'$ is empty.
        \item Either $C_2$ or $C_2'$ is empty.
        \item Either all the vertices in $B_2$ have at least $t-1$ neighbors outside of $C_2'$ or the mirror opposite occurs.
    \end{enumerate}
    \end{claim}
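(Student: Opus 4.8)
The plan is to argue by contradiction: assume all three alternatives fail, and derive a copy of $K_{s,t} = K_{t-1,t}$ in $G$ or a contradiction with $K_{s,t}$-saturation. So suppose $B_2 \neq \emptyset$ and $B_2' \neq \emptyset$ (first alternative fails), $C_2 \neq \emptyset$ and $C_2' \neq \emptyset$ (second fails), and there exist $b \in B_2$ with fewer than $t-1$ neighbors outside $C_2'$, and symmetrically $b' \in B_2'$ with fewer than $t-1$ neighbors outside $C_2$ (third fails).

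First I would extract the combinatorial consequences of the $s=t-1$ setup already recorded before the claim: $(A,A')$ is complete bipartite $K_{t-1,t-1}$; every vertex of $B$ is adjacent to all of $A' \setminus \{a_0'\}$ and every vertex of $B'$ is adjacent to all of $A \setminus \{a_0\}$; and every vertex of $B_2$ has no neighbor in $B'$ (and symmetrically for $B_2'$). Since $b \in B_2$ has $s-1 = t-2$ neighbors in $A'$ forced, $b$ has exactly one more neighbor (it has degree at least $t-1$, and being in $B_2$ it has at most $t-2$ neighbors in $A' \cup B'$, hence exactly $t-2$, all in $A'$), and that extra neighbor lies in $C'$; moreover by the hypothesis that the third alternative fails, $b$'s neighbors outside $C_2'$ number at most $t-2$, so all of $b$'s neighbors in $C'$ actually lie in $C_2'$. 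The key structural fact I expect to need is that $b$ is adjacent to every vertex of $C_2'$: I would prove this by considering, for $v' \in C_2'$ non-adjacent to $b$, the edge $bv'$; since $v' \in C_2'$ has at most $s-2 = t-3$ neighbors outside $B_2$, the created $K_{t-1,t}$ must use at least one vertex of $B_2$, but $B_2$ vertices have no neighbors in $B'$ and limited neighbors, forcing enough of the copy into $A \cup A'$ to contradict the degree constraints — essentially the same mechanism as in the second Observation above. Combined with the earlier Observation that $C_2$–$C_2'$ is complete bipartite, this says $\{b\} \cup C_2$ is completely joined to $C_2'$.

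Then I would produce the forbidden $K_{t-1,t}$. With $b$ adjacent to all of $C_2'$ and all of $A'\setminus\{a_0'\}$, and with $C_2$ completely joined to $C_2'$: if $|C_2'| \geq t$, then picking $t-2$ vertices of $C_2$ together with $b$ and one vertex $w$ of $A \setminus \{a_0\}$ — wait, I need the $t-1$ side to see a common $t$-set; instead I would take the $t-2$ vertices $A' \setminus \{a_0'\}$ plus two vertices of $C_2'$ as the $t$-set on the $U'$ side (all adjacent to $b$ and to each vertex of $A \setminus \{a_0\} \cup$ appropriate $C_2$ vertices), and on the $U$ side take $b$ together with $t-2$ vertices drawn from $C_2 \cup (A \setminus \{a_0\})$, checking adjacency case by case using completeness of $(A,A')$, completeness of $C_2$–$C_2'$, and $b$–$C_2'$ adjacency. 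If instead $|C_2'| < t$ (or $|C_2| < t$), then $C_2'$ is small, and I would exploit the failure of the third alternative from the $b'$ side symmetrically, together with the saturation condition applied to an edge from $b'$ into $C_2$, to force a $K_{t-1,t}$ the other way; the size thresholds are where I expect to juggle small cases.

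The main obstacle will be the bookkeeping in the final step: verifying that the various pieces ($b$, selected vertices of $C_2$, selected vertices of $A$; and on the other side $A' \setminus \{a_0'\}$, selected vertices of $C_2'$) really do form a complete bipartite $K_{t-1,t}$, and handling the genuinely small configurations where $C_2$ or $C_2'$ has fewer than $t$ vertices so that one cannot simply grab enough vertices — there one must use both $b$ and $b'$ and the core vertices simultaneously, and possibly re-examine whether $a_0, a_0'$ can be brought back in. I would organize this as: (i) the consequences of $s=t-1$; (ii) the adjacency lemma "$b$ sees all of $C_2'$" via the saturation/Observation argument; (iii) the large-$C_2'$ construction of $K_{t-1,t}$; (iv) the small-$C_2'$ case handled symmetrically with $b'$.
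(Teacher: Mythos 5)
Your proposal takes a genuinely different and, as written, incomplete route. You set up the same contradiction hypothesis and correctly extract the $s=t-1$ facts (in particular that $B_2$ has no neighbors in $B'$, hence no $B_2$--$B_2'$ edges, and that a $B_2$ vertex $b$ violating alternative 3 has $N(b) \subseteq (A'\setminus\{a_0'\}) \cup C_2'$). But from there you diverge: you try to prove a strong adjacency lemma (``$b$ sees all of $C_2'$'') and then directly assemble a copy of $K_{t-1,t}$ from $b$, $C_2$, $A\setminus\{a_0\}$, $A'\setminus\{a_0'\}$, $C_2'$, with case analysis on $|C_2'|$.

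There are two concrete gaps. First, the adjacency-lemma step is not established and the mechanism you cite does not obviously close. When you add $bv'$ for $v'\in C_2'$ non-adjacent to $b$, the paper's second Observation got its contradiction because the vertex $v\in C_2$ was forced to have $\ge t-1$ neighbors in $A'\cup B'$, contradicting $v\in C$. But $b\in B_2$ is \emph{supposed} to have $\le t-2$ neighbors in $A'\cup B'$, has exactly $t-2$ neighbors in $A'$, and has no neighbors in $B'$; forcing additional $B_2$ vertices into the copy does not push $b$ (or them) over any threshold in $A'\cup B'$. So the phrase ``forcing enough of the copy into $A\cup A'$ to contradict the degree constraints'' does not correspond to an actual contradiction. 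Second, the terminal construction of a $K_{t-1,t}$ is left unverified; even if $b$ were joined to all of $C_2'$, the required common adjacencies (e.g.\ $C_2$ to $A'$, or $A$ to $C_2'$) are not available, and you yourself flag that small $|C_2|,|C_2'|$ cases remain to be handled.

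The missing idea, which makes the paper's proof short, is to add the \emph{missing} edge $bb'$ between $b\in B_2$ and $b'\in B_2'$ (missing precisely because $s=t-1$ forces $e(B_2,B_2')=0$). Since $b$ has $< t-1$ neighbors outside $C_2'$ and $b'$ has $< t-1$ neighbors outside $C_2$, the resulting copy of $K_{t-1,t}$ must use a vertex of $C_2$ or $C_2'$; say it is a $K_{(t,t-1)}$ containing some $u\in C_2$. Since $u\in C_2$ has $< t-2$ neighbors outside $B_2'$, the $t-1$ vertices on the $U'$ side of the copy must include a second vertex $w'\in B_2'$ besides $b'$, and then $bw'$ would be an edge of $G$, contradicting $e(B_2,B_2')=0$. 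No direct assembly of a $K_{t-1,t}$ and no case split on $|C_2'|$ is needed.
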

    
    We comment here that since the first two cases both vacuously imply the third one, the last case always holds. However, for the convenience of analysis, we still state the first two cases separately. 
    
    \begin{proof}
        For the sake of contradiction, let us assume that none of $B_2$, $B_2'$, $C_2$, and $C_2'$ are empty, and there are two vertices $v \in B_2$ and $v' \in B_2'$ such that $v$ and $v'$ have less than $t-1$ neighbors outside of $C_2'$ and $C_2$ respectively. Since $s=t-1$, by definition, the vertices in $B_2$ and $B_2'$ do not have any neighbors in $B'$ and $B$ respectively. Then, there are no edges between $B_2$ and $B_2'$. If we add the edge $vv'$, then the created copy of $K_{t-1,t}$ must use at least one vertex from $C_2$ or $C_2'$ because of the degree restriction of $v$ and $v'$ outside of $C_2'$ and $C_2$ respectively. Without loss of generality, say it creates a copy of $K_{(t, t-1)}$ and uses a vertex $u\in C_2$. We know that $u$ has less than $t-2$ neighbors outside of $B_2'$, so there should be at least two vertices from $B_2'$ participating in the created copy of $K_{(t,t-1)}$. Hence, there is a vertex $w' \in B_2'$ other than $v'$ inside the $K_{(t,t-1)}$. This implies that $vw'$ must be an edge of $G$, which contradicts the fact that there are no edges between $B_2$ and $B_2'$. 
    \end{proof}
    
    In the next couple of claims we prove that all three cases of Claim \ref{three} would force $G$ to be either one of the graphs in $\F_{s,t}^n$ or a strictly sub-optimal graph.
 
    \begin{claim} \label{case2}
    If either $B_2$ or $B_2'$ is empty, then $G \in \F_{t-1,t}^n$. 
    \end{claim}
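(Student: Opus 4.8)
The plan is to reduce to the case $B_2'=\emptyset$ (the other case being symmetric), to show that the $t-2$ vertices of $A'\setminus\{a_0'\}$ are then adjacent to \emph{every} vertex of $U$, equivalently that $C=\emptyset$, and finally to read off the $\F_{t-1,t}^n$ structure by an edge count. So assume $B_2'=\emptyset$. Then every vertex of $B'$ has at least $t-1$ neighbours in $A\cup B$; since a $B'$-vertex is adjacent to all of $A\setminus\{a_0\}$ but not to $a_0$ (because $N(a_0)=A'$), it has exactly $t-2$ neighbours in $A$, hence at least one neighbour in $B$. (Also, since the minimum degree is at least $t-1>s-1$, every vertex of $C$ trivially has at least $s-1$ neighbours outside $B_2'=\emptyset$, so $C=C_1$.) Two further facts will be used repeatedly. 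First, since the core $(A,A')$ is complete and every vertex of $B$ (resp.\ $B'$) is adjacent to all of $A'\setminus\{a_0'\}$ (resp.\ $A\setminus\{a_0\}$), the set $A\cup B$ is completely joined to $A'\setminus\{a_0'\}$ and $A'\cup B'$ is completely joined to $A\setminus\{a_0\}$. Second, since $G$ is $K_{t-1,t}$-free, no vertex $w\in B$ can have two neighbours $x',x''\in B'$: otherwise $(A\setminus\{a_0\})\cup\{w\}$ (of size $t-1$) would be completely joined to $(A'\setminus\{a_0'\})\cup\{x',x''\}$ (of size $t$), a $K_{(t-1,t)}$; symmetrically no vertex of $B'$ has two neighbours in $B$. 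Combined with the previous sentence, every vertex of $B'$ has exactly one neighbour in $B$ and distinct vertices of $B'$ receive distinct such neighbours; this is where $B_2'=\emptyset$ enters.

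The heart of the argument is to prove $C=\emptyset$. Suppose $v\in C$. By the Observation that every vertex of $C$ has at least $t-1$ neighbours in $A'\cup B'$, $v$ has such neighbours; if it had at least $t$ of them, then $(A\setminus\{a_0\})\cup\{v\}$ together with $t$ of these neighbours would form a $K_{(t-1,t)}$ in $G$, a contradiction, so $v$ has exactly $t-1$ of them, a set $Y'$. Since $v$ has at most $t-3$ neighbours in $A'$ and is not adjacent to $a_0'$, at least two vertices of $Y'$ lie in $B'$. Now add the non-edge $va_0'$; the created copy of $K_{t-1,t}$ must use this edge, hence uses $v$ and $a_0'$, and checking cases (using $N(a_0')=A$ and that $v$ has fewer than $t-2$ neighbours in $A'$) rules out a $K_{(t,t-1)}$, so it is a $K_{(t-1,t)}$ whose $U$-side must be $\{v\}\cup(A\setminus\{a_0\})$ and whose $U'$-side must be $\{a_0'\}\cup Y'$. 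I then intend to pick two distinct vertices $x_1',x_2'\in Y'\cap B'$, their distinct neighbours $w_1,w_2\in B$ coming from the matching above, and to combine the adjacencies of $v,w_1,w_2,A\setminus\{a_0\}$ on the $U$-side with those of $Y'$, $A'\setminus\{a_0'\}$ and $a_0'$ on the $U'$-side to exhibit a $K_{(t-1,t)}$ or a $K_{(t,t-1)}$ already present in $G$, contradicting that $G$ is $K_{t-1,t}$-free. This assembling step is the main obstacle: the degrees of the core vertices $A\setminus\{a_0\}$ and $A'\setminus\{a_0'\}$ are not controlled a priori, and $B$- and $C$-vertices may have uncontrolled neighbours in $C'$, so one must use the matching structure (hence $B_2'=\emptyset$) carefully.

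Once $C=\emptyset$, every vertex of $U=A\cup B$ is adjacent to all $t-2$ vertices of $S:=A'\setminus\{a_0'\}$, so $S$ is completely joined to $U$ and $|S|=t-2=s-1$. Counting edges from the $U'$-side, $e(G)=\sum_{u'\in S}d(u')+\sum_{u'\in U'\setminus S}d(u')\ge (t-2)n+(t-1)\bigl(n-(t-2)\bigr)=(2t-3)n-(t-1)(t-2)$, using the minimum-degree bound on the second sum. On the other hand $G$ has the fewest possible edges among $K_{t-1,t}$-saturated graphs, and the graphs of $\F_{t-1,t}^n$ realise exactly $(2t-3)n-(t-1)(t-2)$ edges, so $e(G)\le (2t-3)n-(t-1)(t-2)$. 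Hence equality holds throughout, so every vertex of $U'\setminus S$ has degree exactly $t-1$; since $G$ is $K_{t-1,t}$-free, this is precisely the statement that $G\in\F_{t-1,t}^n$.
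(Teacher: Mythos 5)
Your proof has a genuine gap at its central step. You reduce everything to showing $C=\emptyset$ (in your orientation, with $B_2'=\emptyset$), set up the structure around a hypothetical $v\in C$ correctly, and then write that you ``intend to'' assemble a forbidden copy of $K_{t-1,t}$ from $v$, the matched pairs $w_i,x_i'$, and the core --- and you yourself flag this assembling step as ``the main obstacle.'' That step is exactly the missing proof, and I do not believe it can be completed by purely structural means: the conclusion that $C=\emptyset$ (equivalently $C'=\emptyset$ in the paper's mirrored labelling) is not a consequence of saturation plus $B_2'=\emptyset$ alone, but of the edge-minimality of $G$. The paper never produces a forbidden subgraph here; it shows instead that a nonempty $C'$ forces a strict surplus of edges. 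Your structural preliminaries (the perfect matching between $B$ and $B'$, the exact description of the copy created by adding $va_0'$) are correct but do not obviously close the argument, because, as you note, the neighbourhoods of $v$ and of the $w_i$ inside $C'$ are uncontrolled.

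The fix is short and you already have all the ingredients, since you invoke edge-minimality in your last paragraph anyway: move that invocation earlier. With $B_2=\emptyset$ (paper's orientation), if $C'\neq\emptyset$ one counts
\begin{align*}
e(U,U') &\ge e(A,A') + e(B_1,A'\cup B') + e(C,A'\cup B') + e(B',A) + e(C',U)\\
&\ge (t-1)^2 + (t-1)\bigl(|B_1|+|C|\bigr) + (t-2)|B'| + (t-1)|C'|\\
&= (2t-3)n-(t-1)(t-2)+|C'| > (2t-3)n-(t-1)(t-2),
\end{align*}
contradicting minimality; here the five edge sets are pairwise disjoint, the bound $(t-1)|C|$ comes from the first Observation of the section, and $(t-1)|C'|$ from the minimum degree. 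Hence $C'=\emptyset$, and your final paragraph (which is correct) then identifies $G$ as a member of $\F_{t-1,t}^n$.
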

    
    \begin{proof}
        Without loss of generality, assume that $B_2$ is empty. If $C'$ is empty, then every vertex in $U'$ is adjacent to the vertices in $A\setminus a_0$, resulting in $G$ being a graph in $\F_{t-1,t}^n$. Thus, we can assume that $C'$ is non-empty. Now by Observation \ref{obs}, the number of edges in $G$ is the following:
        
        \begin{align*}
            e(U, U') &= e(A, A') + e(B_1, A' \cup B') + e(C, A' \cup B') + e(B', A) + e(C', U) \\
            &\ge (t-1)^2 + (t-1)|B_1| + (t-1)|C| + (t-2)|B'| + (t-1)|C'| \\
            &> (2t-3)n - (t-1)(t-2),
        \end{align*}
        where the last strict inequality follows from the fact that $|C'| > 0$. This strict inequality contradicts the fact that $G$ has the minimum number of edges among all $K_{t-1,t}$-saturated $n$ by $n$ bipartite graphs.
    \end{proof}
    
    \begin{claim} \label{case3}
    If either $C_2$ or $C_2'$ is empty, then $G \in \F_{t-1,t}^n$.
    \end{claim}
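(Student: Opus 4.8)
The plan is to mirror the structure of Claim \ref{case2}, again using the edge-counting inequality built around the nice core, together with the structural facts already established for the case $s=t-1$ (namely that $(A,A')$ is complete, every vertex of $B$ (resp.\ $B'$) is joined to all of $A'\setminus\{a_0'\}$ (resp.\ $A\setminus\{a_0\}$), and every vertex of $B_2$ (resp.\ $B_2'$) has no neighbour in $B'$ (resp.\ $B$)). Without loss of generality assume $C_2$ is empty, so $C=C_1$ and every vertex of $C$ has at least $s-1=t-2$ neighbours outside $B_2'$, in addition to the at least $t-1$ neighbours in $A'\cup B'$ guaranteed by the first observation of this section.

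First I would dispose of the easy sub-case: if $C'$ is also empty, then every vertex of $U'$ lies in $A'\cup B'$, hence is adjacent to all $t-2$ vertices of $A\setminus\{a_0\}$; combined with minimum degree at least $t-1$ this forces $G\in\F_{t-1,t}^n$ after checking that any vertex of $U$ outside $A\setminus\{a_0\}$ of degree $>t-1$ would create a $K_{(t-1,t)}$, exactly as in the proof of Lemma \ref{findcore}. So I may assume $C'\neq\emptyset$. Then I would write a lower bound for $e(U,U')$ by splitting the $U'$-side contribution among $A'$, $B'$, and $C'$, and accounting separately for the edges from $A\cup B_1\cup C$ into $A'\cup B'$: something like
\[
e(U,U') \ge (t-1)^2 + (t-1)|B_1| + (t-1)|C| + (t-2)|B_2| + (t-2)|B'| + (t-1)|C'|,
\]
where the term $(t-2)|B_2|$ uses that every vertex of $B_2$ is adjacent to all of $A'\setminus\{a_0'\}$ (and has no neighbours in $B'$), and the final $(t-1)|C'|>0$ strict surplus comes from $C'$ being non-empty together with the minimum degree bound. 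Since $(t-1)^2 + (t-1)(|A|-1+|B_1|+|C|) \ge (t-1)(|A|+|B_1|+|C|) = (t-1)(n-|B_2|)$ — using $|A|=t-1$ — and the $U'$-side edges contribute at least $(t-2)|B_2|+(t-2)|B'| + (t-1)|C'|$, I would collect terms to reach $e(U,U') > (2t-3)n - (t-1)(t-2)$, contradicting optimality of $G$ (whose edge count is at most that of the $\F_{t-1,t}^n$ graphs).

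The main obstacle I anticipate is bookkeeping so that no edge is counted twice while still capturing the strict surplus: the edges incident to $B_2$ go to $A'\setminus\{a_0'\}$, which is inside $A'$, so I must be careful that the $(t-1)^2$ term for $e(A,A')$ and the $(t-2)|B_2|$ term for $e(B_2,A')$ are genuinely disjoint (they are, since $B_2\cap A=\emptyset$), and similarly that $e(C,A'\cup B')$ and $e(C',U)$ do not overlap. The cleanest way is to partition the edge set by which endpoint lies in $U'$: assign every edge to its $U'$-endpoint's block ($A'$, $B'$, or $C'$), bound the $A'$-block by $e(A',U)\ge (t-1)|A'|$ using that $A'=N(a_0)$ forces... actually more simply use $e(A',U)\ge (t-1)(t-1) + (\text{edges from }A'\text{ to }U\setminus A)$, bound the $B'$-block by $(t-2)|B'|$ via adjacency to $A\setminus\{a_0\}$ plus whatever extra, and the $C'$-block by the minimum-degree bound $(t-1)|C'|$; then separately add the guaranteed $B_2\to A'$ and $C\to A'\cup B'$ edges that were \emph{not} already counted because their $U'$-endpoint is in $A'$ or $B'$ — this double-counts, so instead I would assign edges by their $U$-endpoint when convenient. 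I will settle on one consistent accounting (by $U$-endpoint: blocks $A$, $B_1$, $B_2$, $C$, giving degrees $\ge t-1$, $\ge t-1$, $\ge t-2$... wait $B_2$ vertices have degree $\ge t-1$ too by the min-degree assumption) — in fact since the global minimum degree is $t-1$, the bound $e(U,U')\ge (t-1)n$ from the $U$-side is free, and the whole point is to extract an additional $(t-1)n - (t-1)(t-2)$ worth of edges from the $U'$-side structure, which the $A'\cup B'\cup C'$ partition with coefficients $t-1$, $t-2$, $t-1$ delivers with the claimed strict surplus. I expect verifying this arithmetic and the non-overlap is routine once the accounting convention is fixed.
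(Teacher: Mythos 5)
There is a genuine gap in the case $C'\neq\emptyset$: the pure edge count you propose does not deliver the strict inequality. Your displayed bound sums to
\[
(t-1)^2+(t-1)\bigl(|B_1|+|C|\bigr)+(t-2)|B_2|+(t-2)|B'|+(t-1)|C'| \;=\; (2t-3)n-(t-1)(t-2)-|B_2|+|C'|,
\]
which exceeds $(2t-3)n-(t-1)(t-2)$ only when $|C'|>|B_2|$. The trade-off is unavoidable: if you charge $B_2$ (resp.\ $C'$) its full minimum degree $t-1$ by counting \emph{all} its incident edges, you must then count only $e(C',U\setminus B_2)\ge (t-2)|C'|$ (resp.\ only $e(B_2,A')\ge(t-2)|B_2|$) to avoid double-counting the $B_2$--$C'$ edges, and every consistent accounting lands exactly at $(2t-3)n-(t-1)(t-2)$ with no surplus. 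So no choice of accounting convention rescues the argument; equality in the count is genuinely achievable, and the claim cannot be closed by counting alone.

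The missing idea is a saturation argument to show that equality in the count forces extra edges. The paper first reduces to the case where $C_1'\neq\emptyset$ and (via Claim \ref{case2}) both $B_2$ and $B_2'$ are nonempty, then picks $v\in B_2$ and $v'\in B_2'$ — a non-edge, since for $s=t-1$ there are no edges between $B_2$ and $B_2'$ — and analyzes the copy of $K_{t-1,t}$ created by adding $vv'$. Tracking which blocks its vertices can lie in shows that either $v$ has at least $t$ neighbours in $U'$ (so $e(B_2,U')>(t-1)|B_2|$) or some $w'\in C_1'$ has at least $t-1$ neighbours in $U\setminus B_2$ (so $e(C_1',U\setminus B_2)>(t-2)|C_1'|$); either way one term in the tight count becomes strict, contradicting minimality. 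Your treatment of the sub-case $C'=\emptyset$ is fine and matches the paper, but without this second, structural step the proof is incomplete.
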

    
    \begin{proof}
        Without loss of generality, assume that $C_2'$ is empty. By Observation \ref{obs}, the number of edges in $G$ is the following:
        
        \begin{align}
            e(U, U') &= e(A, A') + e(B_1, A' \cup B') + e(B_2, U') + e(C, A' \cup B') + e(B', A) + e(C_1', U \setminus B_2) \nonumber \\
            &\ge (t-1)^2 + (t-1)|B_1| + (t-1)|B_2| + (t-1)|C| + (t-2)|B'| + (t-2)|C_1'| \label{eq1} \\
            &= (2t-3)n - (t-1)(t-2). \nonumber
        \end{align}
        
        We now wish to show that if $G$ achieves equality, then $G$ must be one of the graphs in $\F_{t-1,t}^n$. Note that if $C_1'$ is also empty, then we already have a graph from $\F_{t-1,t}^n$. Then, we can assume that $C_1'$ is non-empty. Using Claim \ref{case2}, we can assume that $B_2$ and $B_2'$ both are non-empty. Pick $v \in B_2$ and $v' \in B_2'$ (by (3) after Observation \ref{obs1}, $vv'$ is not an edge in $G$). After adding the edge $vv'$, we split into two cases depending on whether a copy of $K_{(t-1,t)}$ or a copy of $K_{(t,t-1)}$ is created.
        \smallskip
        
        \textbf{Case 1}: adding $vv'$ creates a copy of $K_{(t-1,t)}$. Since $v$ has exactly $t-2$ neighbors in $A'$ and no neighbors in $B'$, there exists $w'\in C_1'$ that is part of the $K_{(t-1, t)}$. In this copy among the $t-1$ vertices in $U$, there can be at most $t-3$ vertices from $A$ (because of the presence of $w'$) and there cannot be any vertices in $B$ other than $v$ (because of the presence of $v'$). Then, there must be $w \in C$ participating in the $K_{(t-1,t)}$. Now, among the $t$ vertices in $U'$, there can be at most $t-3$ vertices from $A$ (due of the presence of $w$) and there cannot be any vertices in $B'$ other than $v'$. This implies there are at least two vertices from $C_1'$ taking part in the $K_{(t-1,t)}$, proving that $v \in B_2$ has at least $t$ neighbors in $U'$. Then, we have that $e(B_2, U') > (t-1)|B_2|$, which in turn makes the inequality in \eqref{eq1} a strict one. Hence, $G$ would have a sub-optimal number of edges, a contradiction.
        \smallskip
        
        \textbf{Case 2}: adding $vv'$ creates a copy of $K_{(t,t-1)}$. Due to similar reasoning as the previous case, there must be a $w \in C$ participating in the $K_{(t,t-1)}$. Again with similar reasons as before, there is also $w' \in C_1'$ taking part in the $K_{(t,t-1)}$. Now, note that this $K_{(t,t-1)}$ uses $t-1$ vertices in $U \setminus B_2$. Then, $w'$ has at least $t-1$ neighbors outside of $B_2$, proving that $e(C_1', U \setminus B_2) > (t-2)|C_1'|$. This again shows that the inequality in \eqref{eq1} is a strict one, which again leads to a contradiction like last time.    
    \end{proof}

   \begin{claim} \label{case1}
    If all the vertices in $B_2$ or $B_2'$ have at least $t-1$ neighbors outside of $C_2'$ or $C_2$ respectively and $C_2$ and $C_2'$ are non-empty, then $G$ has strictly sub-optimal edges, which is a contradiction.
    \end{claim}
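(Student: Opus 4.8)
The plan is to prove the stronger statement $e(G) > (2t-3)n - (t-1)(t-2)$ directly, by bounding $e(G)$ below as a sum over a family of pairwise disjoint sets of edges, arranged so that the edges forced by the non-empty complete bipartite graph between $C_2$ and $C_2'$ are not absorbed into the other estimates.

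First I would collect the structural facts available when $s=t-1$: $(A,A')$ is complete bipartite; every vertex of $B'$ is joined to all of $A\setminus\{a_0\}$; every vertex of $B_2$ has no neighbour in $B'$ and exactly $t-2$ neighbours in $A'$; every vertex of $C$ (resp.\ $C'$) has at least $t-1$ neighbours in $A'\cup B'$ (resp.\ $A\cup B$); and, by the observation that the edges between $C_2$ and $C_2'$ form a complete bipartite graph, every vertex of $C_2$ is joined to all of $C_2'$. Since $C_2'$ is non-empty, the last two facts give that each $v\in C_2$ has at least $t-1+|C_2'|$ neighbours.

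Then I would lower-bound $e(G)$ by the sum of the following pairwise disjoint edge sets: (1) all edges from $C_2$ to $A'\cup B'\cup C_2'$, of which there are at least $(t-1)|C_2|+|C_2||C_2'|$; (2) all edges from $C_2'$ to $A\cup B$, of which there are at least $(t-1)|C_2'|$ — here the point is that the edges between $C_2$ and $C_2'$ have already been charged to (1), so each vertex of $C_2'$ still contributes a full $t-1$ through $A\cup B$; (3) the $(t-1)^2$ edges of $(A,A')$; (4)--(6) the edges from $B_1$ to $A'\cup B'$, from $C_1$ to $A'\cup B'$, and from $B_2$ to $A'\cup C_1'$, of which there are at least $(t-1)|B_1|$, $(t-1)|C_1|$, and $(t-1)|B_2|$ respectively — the last bound is exactly where the hypothesis that every vertex of $B_2$ has at least $t-1$ neighbours outside $C_2'$, i.e., inside $A'\cup C_1'$, is used; (7) the edges from $A$ to $B'$, of which there are at least $(t-2)|B'|$ since each vertex of $B'$ sees all of $A\setminus\{a_0\}$; and (8) the edges from $C_1'$ to $U\setminus B_2$, of which there are at least $(t-2)|C_1'|$ by the definition of $C_1'$. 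Plugging in the partition identities $|C_2|+|B_1|+|B_2|+|C_1|=n-(t-1)$ and $|B_1'|+|B_2'|+|C_1'|=n-(t-1)-|C_2'|$ and simplifying, the sum of these bounds equals $(2t-3)n-(t-1)(t-2)+|C_2'|(1+|C_2|)$, which is strictly larger than $(2t-3)n-(t-1)(t-2)$ because $|C_2'|\ge 1$; hence $G$ has a strictly sub-optimal number of edges.

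The part requiring the most care is verifying that the eight edge sets above really are pairwise disjoint as subsets of $U\times U'$, so that summing the individual lower bounds is legitimate; in particular, the edges between $B_2$ and $C_2'$ must be counted exactly once, namely in set (2) and not among the edges attributed to $B_2$, which is possible precisely because the one-sided hypothesis lets us route the $t-1$ required neighbours of each vertex of $B_2$ into $A'\cup C_1'$, away from $C_2'$. I expect this bookkeeping, rather than any single inequality, to be the crux; everything else is routine arithmetic.
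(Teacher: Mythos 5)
Your proof is correct and follows essentially the same strategy as the paper: decompose (part of) the edge set into pairwise disjoint classes, bound each from below using the degree facts and the one-sided hypothesis on $B_2$, and observe that the total exceeds $(2t-3)n-(t-1)(t-2)$ because $|C_2'|\ge 1$. The only difference is cosmetic bookkeeping — the paper charges $C_2$'s edges to $e(C,A'\cup B')$ and extracts the surplus solely from $e(C_2',U)\ge(t-1)|C_2'|$, while you additionally charge the complete bipartite graph between $C_2$ and $C_2'$ to get the slightly larger surplus $|C_2'|(1+|C_2|)$; both yield the required strict inequality.
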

    
    \begin{proof}
        Without loss of generality, assume all the vertices in $B_2$ have at least $t-1$ neighbors outside of $C_2'$. In this case, $e(B_1, A' \cup B') + e(B_2, U' \setminus C_2') \ge (t-1)|B_1| + (t-1)|B_2| = (t-1)|B|$, giving us the following inequalities:
        
        \begin{align*}
            e(U, U') &= e(A, A') + e(B_1, A' \cup B') + e(B_2, U' \setminus C_2') + e(C, A' \cup B') + e(B', A) \\
            &\;\;\;\;\;\;\;\;\;\;\;\;\;\;\;\;\;\;\;\;\;\;\;\;\;\;\;\;\;\;\;\;\;\;\;\;\;\;\;\;\;\;\;\;\;\;\;\;\;\;\;\;\;\;\;\;\;\;\;\;\;\;\;\;\;\;\;\;\;\;\;\;\;\;\; + e(C_1', U \setminus B_2) + e(C_2', U) \\
            &\ge (t-1)^2 + (t-1)|B| + (t-1)|C| + (t-2)|B'| + (t-2)|C_1'| + (t-1)|C_2'| \\
            &> (2t-3)n - (t-1)(t-2),
        \end{align*}
        where the last strict inequality follows from the fact that $|C_2'| > 0$. This results in a contradiction.
    \end{proof}

    Note that by combining all previous claims, we have effectively characterized the extremal graphs for $s=t-1$, proving the moreover part of Lemma \ref{nicecore}. To prove the rest of the lemma, we now consider the case of general $s$ and use the following claim from \cite{GKS}.
    
    \begin{claim}[Claim 2.3 in \cite{GKS}] \label{core:bound}
    Assuming $\d \ge t-1$, and the number of edges in the core is $e$, we have that $$e(U, U') \ge (s+t-2)(n-t+1) - \left\lfloor \frac{(s-1)^2}{4}\right\rfloor + e.$$
    \end{claim}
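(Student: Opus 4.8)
The plan is to prove Claim~\ref{core:bound} by a careful edge count that exploits the structure of the core and its partition into the sets $A, B_1, B_2, C_1, C_2$ (and their primed mirrors). Recall that the core $(A,A')$ has $|A| = |A'| = t-1$, and every vertex in $C$ sends at least $t-1$ edges into $A' \cup B'$ while every vertex in $B$ sends at least $s-1$ edges into $A'$; dually on the other side. I would write $e(U,U')$ as a sum over the blocks $e(A, \cdot) + e(B, \cdot) + e(C, \cdot)$, being careful (as in Claims~\ref{case2}--\ref{case1}) to charge each edge exactly once, typically by routing $A'\cup B'$-edges from $C$, $A$-edges from $B$, and using the minimum-degree hypothesis $\delta \ge t-1$ for everything landing in $C'$. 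The outcome should be a bound of the shape $e(U,U') \ge (s-1)|B| + (t-1)|C| + (\text{analogous primed terms}) + (\text{contribution from inside the core})$, and then $|B| + |C| = n - (t-1)$ converts the linear terms into $(s+t-2)(n-t+1)$ up to lower-order slack.

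The genuinely new ingredient compared to the $s=t-1$ claims is the $\left\lfloor (s-1)^2/4 \right\rfloor$ term, which must come from counting edges \emph{inside} the core $(A,A')$. The idea is that since the nice core contains a copy of $K_{s,t-1}$ (or at least $K_{2,3}$ in the degenerate statement; for general $s$ one uses the $K_{s,t-1}$ guaranteed by the nice-core definition together with $N(a_0)=A'$, $N(a_0')=A$), the bipartite graph induced on $(A,A')$ is $K_{s,t}$-free but reasonably dense, so its number of edges $e$ is constrained from below. More precisely, one argues that a $K_{(s,t)}$-free bipartite graph on parts of size $t-1$ and $t-1$ — but with the extra feature that $a_0$ is complete to $A'$ and $a_0'$ is complete to $A$ — must have at least $2(t-1) - 1 + (\text{something})$ edges; the subtracted floor term $\lfloor (s-1)^2/4 \rfloor$ is exactly the deficiency one allows. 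I would compare the true count $e$ against the naive count $(s-1)(t-1) + (t-1)$ coming from charging $s-1$ $A'$-edges per vertex of $A$ plus the $t-1$ edges of $a_0'$ that have already been counted, and show the discrepancy is at most $\lfloor (s-1)^2/4\rfloor$; this is where a small optimization (balancing which vertices in $A$ have only $s-1$ neighbors versus more) produces the floor of a square over four.

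The main obstacle I expect is the bookkeeping: ensuring no edge is double-counted when several blocks share a common neighborhood region (e.g., edges between $B_2$ and $A' \cup B'$ versus edges from $C$ into the same region), and simultaneously squeezing out the best possible constant so that only $\lfloor (s-1)^2/4\rfloor$ is lost rather than something larger like $(s-1)(t-1)$. Concretely, I would set up the count so that every edge incident to $A$ is attributed to $A$, every edge incident to $B$ but not $A$ is attributed to $B$, and every remaining edge is incident to $C \cup C'$ and attributed there via $\delta \ge t-1$; then the only place slack can be recovered or lost is the core itself, localizing the hard estimate. Since the excerpt explicitly says this is ``Claim 2.3 in \cite{GKS}'', I would in fact cite their argument verbatim for the general bound and only re-derive the pieces I need downstream, so the real work is checking that their counting hypotheses ($\delta \ge t-1$, existence of the core) are exactly what we have in hand here — which they are, by assumption in this section.
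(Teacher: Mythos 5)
There is a genuine gap: you have misidentified where the $\left\lfloor (s-1)^2/4\right\rfloor$ term comes from. It has nothing to do with edges inside the core. In the actual argument the core contributes exactly the additive $+e$ at the end (its edges are disjoint from every other edge set counted, so they can simply be added), and no lower bound on $e$ is proved or needed in this claim --- that is done separately when the claim is applied. The floor term instead arises from the interaction between $C_2$ and $C_2'$. The count is asymmetric: on the primed side one charges $t-1$ edges to each vertex of $B_1'$ (by definition of $B_1'$), of $B_2'$ (by $\delta\ge t-1$), and of $C'$ (by the observation that $C'$-vertices have $t-1$ neighbors in $A\cup B$), giving $(t-1)(n-t+1)$; on the unprimed side one charges $s-1$ edges to each vertex of $B$ (into $A'$) and of $C_1$ (into $A'\cup B_1'\cup C'$, by definition of $C_1$), but vertices of $C_2$ have at most $s-2$ neighbors outside $B_2'$ and so cannot be charged this way. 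What saves the count is that $(C_2,C_2')$ induces a complete bipartite graph, so with $y=|C_2|\le y'=|C_2'|$ one gains $e(C_2,C_2')\ge y^2$ while losing $(s-1)y$ from the linear term; minimizing $y^2-(s-1)y$ over integers $y$ is exactly what produces $-\left\lfloor (s-1)^2/4\right\rfloor$. Your proposed scheme of lower-bounding the core's edge count and treating the floor term as a ``deficiency'' of a $K_{s,t}$-free graph on $(A,A')$ would not produce the stated inequality, since $e$ appears as a free parameter added on the right-hand side.

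A secondary issue is your bookkeeping plan (``every edge incident to $A$ attributed to $A$, every edge incident to $B$ but not $A$ attributed to $B$, the rest via $\delta\ge t-1$''): as described it does not make the two halves of the count disjoint. The paper's accounting is careful to route the primed-side charges into $A\cup B$ (or all of $U$ for $B_2'$) and the unprimed-side charges into $A'\cup B_1'\cup C'$, so that the only overlap risk is between $C_1$--$C'$ edges and $C'$--$(A\cup B)$ edges, which are disjoint because $C_1\cap(A\cup B)=\emptyset$. Without this specific routing the counts would collide, and you would not recover the full $(s+t-2)(n-t+1)$ main term.
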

    
    We repeat the proof of this claim from \cite{GKS} for the sake of completeness. 
    
    \begin{proof}
        Let $y = |C_2|$ and $y' = |C_2'|$. Without loss of generality, assume that $y \le y'$. It follows from the discussion after the statement of Lemma \ref{nicecore} and Observation \ref{obs} that $e(B_1', A \cup B) \ge (t-1)|B_1'|$, $e(B_2', U) \ge (t-1)|B_2'|$, and $e(C', A \cup B) \ge (t-1)|C'|$. Then, we have the following:
        
        \begin{align}
            e(B_1', A \cup B) + e(B_2', U) + e(C', A \cup B) &\ge (t-1)|B_1'| + (t-1)|B_2'| + (t-1)|C'| \nonumber \\
            &= (t-1)(n-t+1). \label{prime}
        \end{align}
        
        Now we count the rest of the edges. Notice that $e(B, A') \ge (s-1)|B|$, $e(C_1, A' \cup B_1' \cup C') \ge (s-1)|C_1|$, and $e(C_2, C_2') = yy' \ge y^2$ by Observation \ref{obs1}. Then, we have the following:
        
        \begin{align}
            e(B, A') + e(C_1, A' \cup B_1' \cup C') + e(C_2, C_2') &\ge (s-1)|B| + (s-1)|C_1| + y^2 \nonumber \\
            &= (s-1)(n-t+1-y) + y^2 \nonumber \\
            &\ge (s-1)(n-t+1) - \left\lfloor \frac{(s-1)^2}{4}\right\rfloor. \label{nonprime}
        \end{align}
        
        Hence, Claim \ref{core:bound} follows from the Equations \eqref{prime} and \eqref{nonprime}.
    \end{proof}

\begin{proof}[Proof of Lemma \ref{nicecore}]
    As mentioned before, the moreover part follows immediately from Claims \ref{three} - \ref{case1}. Now, note that in the nice core, each of $a_0$ and $a_0'$ has degree $t-1$ and the core contains a copy of $K_{s,t-1}$. This implies there are at least $s(t-1)+(t-1-s)$ edges in the nice core. Then, Lemma \ref{nicecore} follows immediately from Claim \ref{core:bound}. 
\end{proof}

\section{$K_{2,4}$-saturation}
\label{sec:smallcase}

As discussed in the introduction, it is a simple exercise to determine $\sat\left(K_{n,n}, K_{1,t}\right)$. It is also easy to see that $\sat\left(K_{n,n}, K_{t,t}\right)$ is the same as the `ordered' version $\sat\left(K_{n,n}, K_{(t,t)}\right)$, which was already completely solved. In this paper, we have determined the value of $\sat\left(K_{n,n}, K_{t-1,t}\right)$ for all sufficiently large $n$. Thus, the next smallest open case is to find $\sat\left(K_{n,n}, K_{2,4}\right)$. Next, we show that Conjecture \ref{conjecture} holds for the case of $K_{2,4}$-saturation.

\begin{proposition} \label{k24}
For all sufficiently large $n$, $\sat\left(K_{n,n}, K_{2,4}\right) = 4n - 4$. 
\end{proposition}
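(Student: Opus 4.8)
The plan is to reduce the $K_{2,4}$-saturation problem to the structural machinery already developed in the earlier sections, and then resolve the one remaining gap by a direct small-case analysis. Applying Theorem~\ref{main2} with $s=2$, $t=4$ gives $\sat(K_{n,n},K_{2,4}) \ge 6n - 6 - 0 = 6n-6$; wait—that is not the target value, so the first thing I would do is recompute: with $s=2,t=4$ we have $(s+t-2)n - (t-1)(t-2) - \lfloor (s-1)^2/4\rfloor = 4n - 6 - 0 = 4n-6$. So Theorem~\ref{main2} already yields $\sat(K_{n,n},K_{2,4}) \ge 4n-6$, while the conjectured value is $4n - \lfloor 4/2^2 \cdot 4 \rfloor$... again let me be careful: the conjecture says $(s+t-2)n - \lfloor((s+t-2)/2)^2\rfloor = 4n - \lfloor 4 \rfloor = 4n-4$. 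The Moshkovitz--Shapira construction gives a matching upper bound of $4n-4$, so the whole content of Proposition~\ref{k24} is to close the additive gap of $2$ between the general lower bound $4n-6$ and the truth $4n-4$.

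So the proof naturally splits along the dichotomy already set up in the paper. If $G$ is a $K_{2,4}$-saturated graph with minimum degree $\delta < t-1 = 3$, then Proposition~\ref{mindeg} gives $e(G) \ge (s+t-2)n - (t-1)(t-2) = 4n - 6$; but in fact I would re-examine that argument to sharpen it, since here $\delta \in \{s-1, \dots, t-2\} = \{1,2\}$ is very restricted, and the slack in inequality~\eqref{match} (we used $\delta \le t-2 = 2$ and $\delta \ge s-1 = 1$) should be recoverable: when $\delta = 1$ or $\delta = 2$ one can chase the structure of $V$ and $U'\setminus N(u_0)$ to locate two extra edges, exactly as in the proof of Proposition~2.3 of the paper but pushed two edges further. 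If instead $\delta \ge t-1 = 3$, I invoke the core dichotomy: by Lemma~\ref{findcore}, either $G \in \F^n_{2,4}$ — but graphs in $\F^n_{2,4}$ have $(s+t-2)n - (t-1)(t-2) = 4n-6$ edges, which would actually \emph{beat} $4n-4$, so I must double-check whether $\F^n_{2,4}$ graphs really are $K_{2,4}$-saturated and whether such graphs exist for large $n$ (a $4$-regular-ish bipartite graph on one side avoiding $K_{2,4}$ is a Zarankin-type condition, fine for large $n$) — or $G$ contains a nice core, in which case Lemma~\ref{nicecore} gives $e(G) \ge (s+t-2)n - (t-1)(t-3) - s - \lfloor(s-1)^2/4\rfloor = 6n - 4 - 2 - 0$, wait, $(s+t-2) = 4$, $(t-1)(t-3) = 3\cdot 1 = 3$, so $4n - 3 - 2 = 4n-5$. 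That's still below $4n-4$.

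Consequently the genuine work is an improvement-by-one (or two) in the nice-core case and a matching tightening in the low-degree case, and the main obstacle is the nice-core case: I expect to need to re-run the edge-counting of Claim~\ref{core:bound} with $s=2,t=4$ and track every place a term was rounded down, then argue that if $G$ is within one edge of $4n-4$ the sets $B_2, B_2', C_2, C_2'$ and the core itself are forced into a rigid configuration, and finally derive a contradiction (or show $G$ lies in $\F^n_{2,4}$, which has $4n-6$ edges and hence cannot be the extremal graph when the true minimum is $4n-4$ — meaning $\F^n_{2,4}$ must actually be \emph{empty} or non-$K_{2,4}$-saturated for this specific $s,t$, a point I would verify first since it is the crux). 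The cleanest route is probably: (i) show $\F^n_{2,4} = \emptyset$ for large $n$ (a $K_{2,4}$-free bipartite graph with $s-1 = 1$ dominating vertex on one side and all other vertices of degree exactly $t-1 = 3$ on that side forces, on the \emph{other} side, every vertex to see the dominating vertex plus a $3$-uniform linear-type hypergraph with no two edges meeting in $\ge 3$ points — this is consistent, so $\F^n_{2,4}\ne\emptyset$; then such a graph IS $K_{2,4}$-saturated with $4n-6 < 4n-4$ edges, contradicting the known lower bound $\sat \ge 4n-4$?). This tension is exactly the subtlety: either the $4n-4$ lower bound (hence the conjecture value) is what forces $\F^n_{2,4}$ to be non-saturated here, or my arithmetic on $(t-1)(t-2)$ versus $\lfloor((s+t-2)/2)^2\rfloor$ is the point — indeed $\lfloor((s+t-2)/2)^2\rfloor = 4$ while $(t-1)(t-2) = 6$, so $\F^n_{2,4}$ graphs would have \emph{fewer} edges than the conjectured minimum, which is impossible, so the resolution must be that no $K_{2,4}$-free graph of the $\F^n_{2,4}$ shape exists for large $n$: a bipartite graph where one class has a single universal vertex and $n-1$ vertices of degree exactly $3$ must, for large $n$, contain $K_{2,4}$ by counting pairs (the $n-1$ degree-$3$ vertices contribute $3(n-1)$ incidences into an $n$-set, forcing many vertices of codegree $\ge 2$, and combined with the universal vertex one finds $K_{2,4}$). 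Establishing this non-existence is the key lemma; once it holds, Lemma~\ref{findcore} forces a nice core, and then I sharpen Lemma~\ref{nicecore}'s count for $s=2,t=4$ from $4n-5$ up to $4n-4$ by the rigidity argument above, matching the Moshkovitz--Shapira construction.
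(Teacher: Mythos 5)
The central gap is in your treatment of the $\delta\ge 3$ case. You computed that graphs in $\F^n_{2,4}$ would have $4n-6$ edges, noticed this would contradict the lower bound $4n-4$ you are trying to prove, and concluded that $\F^n_{2,4}$ must therefore be \emph{empty} for large $n$ — and proposed to prove this by a pair-counting argument. Both the conclusion and the argument are wrong. The family $\F^n_{2,4}$ is non-empty: take one universal vertex $a_0\in U$ and give every other vertex of $U$ a cyclically shifted $3$-element neighborhood in $U'$ (say $u_i\mapsto\{u_i',u_{i+1}',u_{i+2}'\}$). Then no pair of vertices of $U$ has four common neighbors (each non-universal vertex has degree only $3$), and no pair of vertices of $U'$ has four common neighbors (they share $a_0$ and at most two others), so the graph is $K_{2,4}$-free and lies in $\F^n_{2,4}$. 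Your sketch ("$3(n-1)$ incidences force many vertices of codegree $\ge 2$") does not produce a $K_{2,4}$: you would need codegree $\ge 3$ among the degree-$3$ vertices, which the shifted construction avoids.

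The actual resolution is arithmetic, not existential. The number of edges in an $\F^n_{s,t}$ graph is $(s-1)n + (t-1)(n-s+1) = (s+t-2)n - (s-1)(t-1)$; the paper's displayed value $(s+t-2)n - (t-1)(t-2)$ agrees with this only when $s=t-1$. For $s=2$, $t=4$ the correct count is $4n-3$, which is \emph{greater} than $4n-4$, not less. So graphs in $\F^n_{2,4}$ are excluded from being extremal because they have too many edges, not because they fail to exist. With this corrected, the $\delta\ge 3$ case goes through exactly as in the paper: an extremal $G$ has at most $4n-4 < 4n-3 = (s+t-2)n - (s-1)(t-1)$ edges, so Lemma~\ref{findcore} applies and the $\F^n_{2,4}$ alternative is ruled out by the edge count, forcing a nice core. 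Your high-level plan for the remainder — tighten the low-degree count of Proposition~\ref{mindeg} and push the nice-core bound of Lemma~\ref{nicecore} from $4n-5$ to $4n-4$ by rigidity — is the right direction and matches the paper, but you leave it entirely as a sketch; in the paper this is where essentially all the work of Proposition~\ref{k24} lies (for $\delta=2$, ruling out equality at $4n-5$ via a degree-$2$ vertex; for $\delta\ge 3$, showing the seven-edge core is a $K_{3,3}$ minus two edges at a single vertex and then deriving a contradiction from the forced degree and adjacency structure).
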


\begin{proof}
    The examples stated after Conjecture \ref{conjecture} already establish the upper bound of $4n-4$. For the lower bound, consider a graph $G$ which is a $K_{2,4}$-saturated $n$ by $n$ bipartite graph with minimum number of edges. As in our proof of the main results, we split into two cases depending on the minimum degree of $G$. In both cases, we mirror the strategy of Proposition \ref{mindeg}. Let $\d$ denote the minimum degree of $G$.
    \smallskip

    \textbf{Case 1}: $\delta \le 2$. As $G$ is $K_{2,4}$-saturated, we have that $\d \ge 1$. Similar to the definitions in the proof of Proposition \ref{mindeg}, let $u_0$ be a vertex of degree $\delta$. Without loss of generality, assume $u_0\in U$. For every vertex $u'\in U'\setminus N(u_0)$, adding an edge $u_0u'$ must create a copy of $K_{(4, 2)}$ and thus let $S_{u'}$ be the set of the three neighbors of $u'$ participating in that copy. Note that every vertex in $S_{u'}$ is adjacent to a vertex in $N(u_0)$. Let $V$ denote $\cup_{u'\in U'\setminus N(u_0)}S_{u'}$. Then, performing similar counts as in the proof of Proposition \ref{mindeg}, we have the following:
    
    \begin{equation} \label{mineq}
        e(U,U') \ge |V| + 3(n-\d) + \d(n-|V|).
    \end{equation}
        
    If $\d = 1$, then Equation \eqref{mineq} implies that $e(U,U') \ge 4n-3$, proving Proposition \ref{k24}. Then, the only remaining case to check is when $\d = 2$. In this case, since $a_0\notin V$, we have that $n-|V|\ge 1$ and Equation \eqref{mineq} becomes: 
        
    \begin{equation} \label{equality}
    e(U,U') \ge |V| + 3(n-2) + 2(n-|V|) \ge 4n-5.
    \end{equation}
        
    Thus it remains to show that equality cannot hold in Equation \eqref{equality}. Assume for the sake of contradiction that equality holds. By the proof of Proposition \ref{mindeg}, we can conclude that if equality is satisfied in Equation \eqref{equality}, then we must have $|V| = n-1$; in other words $V = U \setminus \{u_0\}$. Moreover, every vertex in $V$ must be adjacent to exactly one vertex in $N(u_0)$. Furthermore, if we assume that $N(u_0) = \{u_1', u_2'\}$, then every vertex in $V' = U' \setminus N(u_0)$ (clearly, $|V'|=n-2$) must be adjacent to exactly three vertices in $V$. More importantly, this counting accounts for all the edges. Then, by the Handshake Lemma, there exists a vertex $v \in V$ such that $v$ has at most two neighbors in $V'$. Since $v$ has exactly one neighbor in $N(u_0)$, without loss of generality, assume that $vu_1'$ is not an edge.
        
    Consider adding the edge $vu_1'$. Note that it cannot create a copy of $K_{(2,4)}$. Otherwise, the copy must use $v$, some other vertex $w \in U$, $u_1'$, and all three neighbors of $v$ ($v$ must have degree exactly three). However, since every vertex in $V$ has only one neighbor in $\{u_1', u_2'\}$, the vertex $u_0$ is the only candidate for $w$ but $u_0$ is not adjacent to any vertex in $V'$, contradicting the existence of the copy of $K_{(2, 4)}$. 
        
    Thus, adding $vu_1'$ creates a copy of $K_{(4,2)}$. In particular, it contains a vertex in $U'$ of degree at least $4$. Since all vertices in $V'$ have degree exactly three, the $K_{(4, 2)}$ must use $u_2'$. However, there do not exist three vertices in $U$ that are adjacent to both $u_1'$ and $u_2'$, a contradiction.
    \smallskip

    \textbf{Case 2}: $\delta\ge 3$. Then, Lemma \ref{findcore} implies that $G$ contains a nice core. In particular, the core has three vertices on each side, contains $a_0\in U$ and $a_0'\in U'$ that are adjacent to the three vertices in the core on the other side, and the core contains a copy of $K_{2, 3}$. This implies the core has at least seven edges and from Claim \ref{core:bound}, we have that the number of edges in $G$ is at least $4(n-3) + 7 = 4n-5$. Thus, our goal is to show that the equality does not hold.
        
    Assume for the sake of contradiction that equality holds. It follows from Claim \ref{core:bound} that the nice core has exactly seven edges and thus must be a $K_{3, 3}$ with two edges removed. Since the core contains a copy of $K_{2, 3}$, the removed edges are incident to the same vertex. Without loss of generality, let $a_1'\in U'$ be that vertex. Note that $a_0$ is the only neighbor of $a_1'$ inside the core. 
        
    Next we show that for $u' \in U' \setminus A'$, adding any edge $a_0u'$ to $G$ would create a copy of $K_{(4,2)}$. Suppose not, then a copy of $K_{(2,4)}$ is created and all the vertices in $A'$ are used in that copy. However, it can be observed that $a_0'$ and $a_1'$ do not have another common neighbor other than $a_0$, contradicting the creation of a $K_{(2, 4)}$. Hence, adding $a_0u'$ creates a copy of $K_{(4,2)}$. Now observe that the vertex $a_0$ behaves similarly to the vertex $u_0$ in the proof of Proposition \ref{mindeg}. Thus, we again define a couple of sets in the style of Section 2 to help us count the number of edges of $G$.
        
    For any $u' \in U' \setminus A'$, let $S_{u'}$ denote the set of three vertices in $U \setminus a_0$ participating in a created $K_{(4,2)}$ when the edge $a_0u'$ is added. Define $V \subseteq U$ to be the union of these $S_{u'}$'s. Clearly, any vertex in $V$ must be adjacent to at least one neighbor in $A'$, and all the vertices in $U' \setminus A'$ must be adjacent to at least three vertices in $V$. Now, we can count the number of edges in $G$ similarly to the proof of Proposition \ref{mindeg}:
    
    \begin{align}
        4n - 5 &=e(U, U') \nonumber \\
        &= e(A, A') + e(V \setminus A, A') + e(V, U' \setminus A') + e(U \setminus (V \cup A), U') + e(A \setminus V, U' \setminus A') \nonumber \\
        &\ge 7 + |V \setminus A| + 3(n-3) + 3(n - 3 - |V \setminus A|) + 0 \label{rv1} \\
        &\ge 4n - 5. \label{rv2}
    \end{align}
        
    Then, all the steps are equal to $4n-5$. Since the last inequality, in Step \eqref{rv2}, is an equality, we have that $|V \setminus A| = n-3$ (i.e., $V \cup A = U$). Similarly, from the inequality in Step \eqref{rv1}, we can conclude that every vertex in $V\setminus A$ has exactly one neighbor in $A'$, every vertex in $U'\setminus A'$ has exactly three neighbors in $V$, and there are no edges between $A\setminus V$ and $U'\setminus A'$. 
        
    First, we show that in fact $V = U \setminus \{a_0\}$. Consider $a\in A$ that is not $a_0$. Since the minimum degree is at least three and $a$ has only two neighbors in $A'$, there is an edge between $a$ and $U' \setminus A'$. Since $e(A \setminus V, U' \setminus A') = 0$, it follows that $a\in V$. Thus, we have established that $V$ contains all the vertices in $U$ except for $a_0$.
        
    Thus, it follows from the inequality used in Step \eqref{rv1} that every vertex in $U \setminus A$ has exactly one neighbor in $A'$ and every vertex in $U' \setminus A'$ has exactly three neighbors in $U$. Now, since the minimum degree of $G$ is at least three, the vertex $a_1'$ has a neighbor $u$ in $U \setminus A$. Note that $u$ is only incident to $a_1'$ in $A'$.
        
    Now, consider adding the edge $ua_0'$. We claim it cannot create a copy of $K_{(4,2)}$. Suppose it does, then all the vertices in $A$ need to participate in the copy (because they are the only three neighbors of $a_0'$). However, the vertices in $A$ do not share a common neighbor with $u$, contradicting the existence of this $K_{(4, 2)}$. 
        
    Then adding $ua_0'$ creates a copy of $K_{(2,4)}$ which uses a vertex $u' \in U' \setminus A'$ and a vertex $a \in A \setminus \{a_0\}$. Recall that all vertices in $U'\setminus A'$ have degree exactly three, and $a$ and $u$ are neighbors of $u'$. As the final step, consider adding the edge $a_0u'$ and show it cannot create a copy of $K_{2, 4}$. 
        
    If adding $a_0u'$ creates a copy of $K_{(2,4)}$, then that copy must use all the vertices in $A'$. However, $a_0'$ and $a_1'$ do not have another common neighbor other than $a_0$, a contradiction. If adding $a_0u'$ creates a copy of $K_{(4,2)}$, then it uses $a_0$ along with all three neighbors of $u'$. However, $a_0$, $a$, and $u$ do not have any common neighbors in $U'$ (recall $a$ and $u$ are neighbors of $u'$), a contradiction. This contradiction completes the proof of Proposition \ref{k24}.
\end{proof}


\section{Concluding remarks}
\label{sec:conclu}

In this paper, we split the proof of Theorem \ref{main1} into three big cases, and they were discussed in Sections 2, 3, and 4. Note that the arguments in Section 3 work for general $s$ and $t$ to find the nice core as defined in Section 2. It remains to be seen if one can improve the structural arguments in Sections 2 and 4 for general $s$ and $t$. Another interesting problem in graph saturation is to impose degree restrictions on $G$ (see, e.g., \cite{AEHK}, \cite{FS}, and \cite{P}). For example, one can ask if the $K_{s,t}$-saturated ($s < t$) extremal graphs with minimum degree at least $t-1$ are also $K_{(s,t)}$ or $K_{(t,s)}$-saturated graph. Perhaps one can modify the proof of Lemma \ref{nicecore} for general $s$ instead of just $s=t-1$ to answer such questions. A different direction one can also explore is to investigate Conjecture \ref{conjecture} for $s=2$, i.e., the $K_{2,t}$ case. 

Note that our lower bounds in Theorems \ref{main1} and \ref{main2} also work even if we consider another related notion called strong saturation. A graph $G$ is called strongly $H$-saturated if adding any missing edge to $G$ creates a new copy of $H$, but there is no requirement of $G$ being $H$-free to begin with. Similar to the definition of $\sat$, we define $\ssat\left(K_{n,n}, K_{s,t}\right)$ to be the minimum number of edges among all strongly $K_{s,t}$-saturated $n$ by $n$ bipartite graphs. Interestingly, in our arguments of finding the lower bounds in Theorem \ref{main1} and Theorem \ref{main2}, we never use the fact that $G$ is $K_{s,t}$-free, thus the same lower bounds still hold for strong saturation. Similar to Theorem \ref{main1}, we can classify the extremal graphs for strong saturation in the case of $s = t-1$.  

Recently, there is a trend of studying generalized graph saturation problems (which are defined analogously to the generalized extremal problems, see e.g., \cite{AS}). The generalized saturation problem asks to minimize the number of copies of a fixed graph $T$ instead of minimizing the number of edges.  Curious readers can look at \cite{CL} and \cite{KMTT} for some extensions of the classical results in graph saturation. Perhaps, one can consider the generalized saturation problems in the bipartite setting, such as studying the minimum number of copies of $K_{a,b}$ in a $K_{s,t}$-saturated bipartite graph. This can be investigated in the `ordered' or `weak saturation' settings as well.

\section{Acknowledgement}
We are grateful to the anonymous referees for helping us improve the quality of the presentation of this paper.

\end{document}